\newcommand{\jumptmp}[2]{#1\llbracket{#2}#1\rrbracket}
\newcommand{\jump}[1]{\jumptmp{}{#1}}
\newcommand{\gismo}{{\fontfamily{phv}\fontshape{sc}\selectfont G\pmb{+}Smo}\xspace}
\newcommand{\tripplenorm}[1]{{\left\vert\kern-0.25ex\left\vert\kern-0.25ex\left\vert#1 
    \right\vert\kern-0.25ex\right\vert\kern-0.25ex\right\vert}}
\newcommand{\normal}{\mathbf{n}}
\numberwithin{equation}{section}
\newcommand{\tabref}[1]{Table~\ref{#1}}
\numberwithin{equation}{section}
    \newcommand{\keywords}[1]{\par\addvspace\baselineskip  
     \noindent\keywordname\enspace\ignorespaces#1}
\begin{document}
\mainmatter 
\title{\normalsize{MULTIPATCH DISCONTINUOUS GALERKIN IGA FOR THE BIHARMONIC PROBLEM ON SURFACES}}
\titlerunning{dGIGA for the Biharmonic Problem on Surfaces}
\author{Stephen E. Moore}
\authorrunning{S.~E.~Moore}
\institute{Department of Mathematics,\\ University of Cape Coast, Ghana.\\
\email{stephen.moore@ucc.edu.gh}
}
\maketitle
\begin{abstract}
We present the analysis of interior penalty discontinuous Galerkin Isogeometric Analysis (dGIGA) for the biharmonic problem on orientable surfaces $\Omega \subset \mathbb{R}^3.$ Here, we consider a surface  consisting of several non-overlapping patches as typical in multipatch dGIGA. Due to the non-overlapping nature of the patches, we construct NURBS approximation spaces which are discontinuous across the patch interfaces via a penalty scheme. By an appropriate discrete norm, we present \textit{a priori} error estimates for the non-symmetric, symmetric and semi-symmetric interior penalty methods. Finally, we confirm our theoritical results with numerical experiments.
\keywords{
 discontinuous Galerkin methods, surface biharmonic equation, 
 isogeometric analysis, \textit{a priori} error estimates, lapalce-beltrami.
}
\end{abstract}
\,\,
\section{Introduction}
\label{sec:Introduction}
Partial Differential Equations (PDEs) defined on surfaces embedded in $\mathbb{R}^3$ arise in many fields of application including material science, fluid mechanics, electromagnetics, biology and image processing. Fourth-order partial differential equations (PDEs) are particularly important in several areas of applied mechanics, the theory of elasticity, mechanics of elastic plates, and the slow flow of viscous fluids, see e.g. \cite{TimoshenkoWoinowsky:1959a}. Some examples of physical flows modelled by fourth order PDE include fluids on the lungs \cite{Halpern:1998a}, ice formation \cite{Myers:2004a}, imaging \cite{Facundo:2004}, designing special curves on surfaces \cite{Hofer:2004a}, modeling of interfaces in multiphase fluid flows and the modeling of surface active agents (surfactants), see e.g. \cite{Wheeler:2010a,NeNiRuWh:2008a}.

In this article, we consider the fourth-order boundary value problem: 
find $u: \overline \Omega \rightarrow \mathbb{R}$ such that
\begin{align}
\label{eqn:modelproblem}
        \Delta_{\Omega}^2 u + u= f \quad \text{in} \quad \Omega, 
\end{align}
where $f $ is a square integrable load vector defined on a compact smooth and orientable surface $\Omega \subset \mathbb{R}^3$ with boundary $\partial \Omega$ consisting of Dirichlet $\Gamma_D$ and Neumann $\Gamma_N$ boundaries, i.e. $\partial \Omega = \Gamma_D \cup \Gamma_N$ with the following considtions 
\begin{align}
	   u = g_0, \quad \mathbf{n} \cdot \nabla_{\Omega} u = g_1, \, \, \text{on} \, \,  \Gamma_D, 
   \quad \Delta_{\Omega} u = g_2, \quad & \mathbf{n} \cdot \nabla_{\Omega} \Delta_{\Omega} u = g_3 \, \,  \text{ on } \, \, \Gamma_N,
\end{align}
where $ \mathbf{n}$ is the outward directed normal vector to the boundary $\partial \Omega.$ We define the bi-Laplacian operator $\Delta_{\Omega}^2 := \Delta_{\Omega} \Delta_{\Omega} $ with $\Delta_{\Omega}$ as the Laplace-Beltrami operator, and the boundary data $g_0, g_1, g_2$ and $g_3$ are smooth functions.

Surface PDEs are usually treated with surface FEM which is a very popular discretization method. However, it is well known that surface FEM has major drawbacks due to the discrete 
variational formulation of the PDE that is constructed on a triangulated surface 
which contains the finite elements space \cite{DziukElliott:2013a}. 
Several works concerning the discretization of fourth order PDEs on surfaces using surface FEM  including an application of interior penalty Galerkin (IPG) methods have been presented, see e.g.  \cite{LarssonLarson:2013a}; which is an extension of the second order PDEs version  \cite{DednerMadhavanStinner:2013a}.

Fourth order PDE require continuously differentiably $(i.e. C^1-)$ piece-wise polynomial basis functions which are known to be practically difficult to construct as well as computationally expensive. However, in recent times, a new approximation method has been proposed that has $(p-1)-$continuously differentiable basis functions i.e. $C^{(p-1)},$ with degree $p\geq 1$ which makes it ideal towards the approximation of higher order PDEs including the biharmonic problem. This method is known as the isogeometric analysis (IGA). Moreover, IGA uses the same class of basis functions for both representing the geometry of the domain and approximating the solution of the PDEs \cite{BazilevsBeiraoCottrellHughesSangalli:2006a,TagliabueDedeQuarteroni:2014a}. 

 Multipatch discontinuous Galerkin IGA has been introduced and analyzed for second order elliptic problems on surfaces with matching and non-matching meshes, see e.g. \cite{LangerMoore:2014a,LangerMantzaflarisMooreToulopoulos:2015b,NKBBB:2014a,Moore:2020a}. Here, the computational domain consists of several conforming non-overlapping subdomains. By applying interior penalty methodology, we construct discrete spaces on the patches allowing for discontinuity along the patch interfaces. In this regards, the results presented in this article is an extention of the dGIGA to the biharmonic problem \cite{Moore:2018a}. In this article, we will present \textit{a priori} error estimate for multipatch discontinuous Galerkin isogeometric analysis (dGIGA) for biharmonic problem on conforming patches with matching meshes on orientable surfaces.

We organize the article as follows; 
In Section~\ref{sec:preliminaries}, the function spaces, weak formulation and the isogeometric analysis framework, NURBS surfaces, geometrical mappings and isogeometric analysis. The derivation of the interior penalty discontinuous Galerkin scheme is presented in Section~\ref{sec:InteriorPenaltyeqn:variationalformulation}. Then, in Section~\ref{sec:AnalysisOfDiscretizationError}, we present a discrete NURBS space $V_h,$ the discrete norms $\|\cdot\|_h$ and $\|\cdot\|_{h,*}$ then subsequently prove the coercivity of the bilinear forms. The boundedness of the bilinear forms is asserted in a product space $V_{h,*} \times V_h,$ where we will need another discrete norm $\|\cdot\|_{h,*}$  defined on the vector space $V_{h,*}.$ By using the idea of equivalence of norms, we are able to present coercivity and boundedness on the norm $\|\cdot\|_{h,*}.$ The error analysis of the dGIGA scheme is presented in Section~\ref{sec:erroranalysisOfdgigadiscretization}. In Section~\ref{sec:NumericalResults}, we present and discuss numerical experiments to confirm our theoretical results. Finally, we draw some conclusions and discuss future works in Section~\ref{sec:Conclusion}.
\section{Preliminaries}
\label{sec:preliminaries}
Let the computational domain $\Omega$ be a compact smooth and oriented surface with boundary $\partial \Omega.$ 
We introduce the Sobolev space 
$H^s(\Omega) :=\{v \in L_2(\Omega) \, : \, D^{\alpha}_{\Omega} v \in L_2(\Omega), \, \, \text{for} \, \, 0 \leq |\alpha| \leq s  \}$, where $L_2(\Omega)$ denote the space of square integrable functions and let $\alpha = (\alpha_1,\ldots,\alpha_d)$ be a multi-index with non-negative integers $\alpha_1,\ldots,\alpha_d$, $|\alpha| = \alpha_1 +\ldots + \alpha_d,$ $D^{\alpha}_{\Omega} := \partial^{|\alpha|}/\partial x^\alpha,$  and associate with the sobolev space $H^s(\Omega)$ the norm 
 $\|v\|_{H^s(\Omega)} = \left( \sum_{0 \leq |\alpha| \leq s} \|D^\alpha_{\Omega} v\|^2_{L_2(\Omega)} \right)^{1/2}$ see, e.g. \cite{AdamsFournier:2008}. 

The weak variational formulation of the biharmonic problem \eqref{eqn:modelproblem} reads:
find $u \in V_D$ such that 
\begin{align}
\label{eqn:variationalformulation}
     a(u,v) & = \ell(v), \quad \forall v \in V_0,
\end{align}
where the bilinear and linear forms are given by 
\begin{align}
 a(u,v) & = \int_{\Omega} \Delta_{\Omega} u \Delta_{\Omega} v\, dx + u v\, dx \quad\text{and} \quad \notag \\ 
 \ell(v) & = \int_\Omega f v\, dx + \int_{\Gamma_N} ( \mathbf{n} \cdot \nabla_{\Omega} v g_2 + v g_3 )ds \,
\end{align}
and the hyperplane  and test space given by   
$V_{D} :=\{v \in H^2(\Omega) : v|_{\Gamma_D} = g_0,\quad \mathbf{n} \cdot \nabla_{\Omega} v|_{\Gamma_D} = g_1 \}$ and 
$V_{0} :=\{v \in H^2(\Omega) : v|_{\Gamma_D} = 0, \quad \mathbf{n} \cdot \nabla_{\Omega} v|_{\Gamma_D} = 0 \}.$ 

The existence and uniqueness of the variational problem \eqref{eqn:variationalformulation} 
follows the well-known Lax-Milgram lemma see e.g. \cite{Ciarlet:2002a}. 
\subsection{NURBS Geometrical Mapping and Surfaces}
\label{subsec:surfaces}
{\allowdisplaybreaks
For the parameter domain $\widehat{\Omega},$ we define a vector-valued independent variable in the parameter domain $\widehat{\Omega}$ by $\xi = (\xi_1,\xi_2) \in \mathbb{R}^2.$ By means of a smooth and invertible geometrical mapping $\mathrm{\Phi}$, the computational  domain $\Omega \subset \mathbb{R}^3$ is defined as
 \begin{equation}
\label{eqn:geometricalmapping}
   \mathrm{\Phi} : \widehat{\Omega} \rightarrow \Omega \subset \mathbb{R}^3, 
  \quad 
  \mathbf{\xi} \rightarrow x = \mathrm{\Phi}(\mathbf{\xi}),
\end{equation}
where $\widehat{\Omega} \subset \mathbb{R}^2$ is the parameter domain  as illustrated in Figure~\ref{fig:igamap}.

  \begin{figure}[th!]
  \centering
      \includegraphics[width=0.6\textwidth]{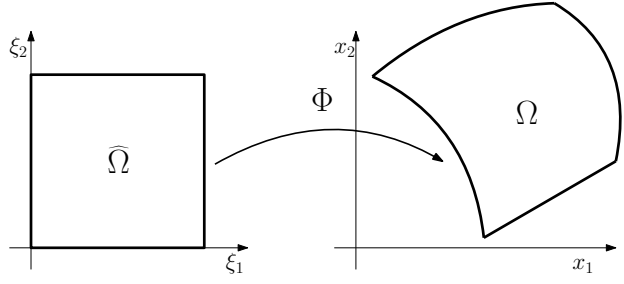}
      \caption{Illustration of the geometrical map $\Phi : \widehat{\Omega} \subset \mathbb{R}^2 \rightarrow \Omega \subset \mathbb{R}^3$ for a patch.}
      \label{fig:igamap}
 \end{figure}

We introduce briefly some important mathematical tools necessary for the analysis of surface PDEs. The following objects are obtained by means of the geometrical mapping \eqref{eqn:geometricalmapping} in the parameter domain. The Jacobian $\widehat{J},$ first fundamental form $\widehat{F}$ and the determinant $\widehat{g}$ of the geometrical mapping are respectively given by
\begin{align}
 \widehat{J} 
	&:=
	\left[ 
	\frac{\partial \mathrm{\Phi}_k}
 	{\partial \mathbf{\xi}_l}	
	\right] \in \mathbb{R}^{3 \times 2}, \quad k  = 1,2,3, \,l  = 1,2,\label{eqn:jacobian}\\ 
 \label{eqn:firstfundamentalanddeterminant}
 \widehat{F}(\xi) &= \left(\widehat{J}(\xi) \right)^{T}\widehat{J}(\xi) \in \mathbb{R}^{3 \times 3} \quad \text{and} \quad \widehat{g}(\xi)= \sqrt{\det \left({\widehat{F}(\xi) }\right)} \in \mathbb{R}.
\end{align}

Next, we present some differential operators by using notations in the parameter domain.
We consider a smooth function $\phi$ defined on the manifold $\Omega,$ by using the invertible geometrical mapping \eqref{eqn:geometricalmapping} to obtain   
\begin{equation}
 \label{eqn:invertiblemap}
 \phi ( x) = \widehat{\phi}(\xi) \circ \Phi^{-1}(x), \quad x \in \Omega,
\end{equation}
where $\widehat{\phi}(\xi) = \phi (\mathrm{\Phi}(\xi)).$
Using the gradient operator in the parameter space $\nabla \widehat{\phi}$, the tangential gradient of the manifold is given by 
\begin{equation}
\label{eqn:tangentialgradient}
  \nabla_{\Omega}\phi (x):= \widehat{J}(\mathbf{\xi})\widehat{F}^{-1}(\mathbf{\xi}) \nabla \widehat{\phi}(\mathbf{\xi}) \circ \Phi^{-1}(x).
\end{equation}
The divergence operator for the vector-valued function can be written as 
\begin{equation}
\label{eqn:divergenceoperator}
  \nabla_{\Omega} \cdot \varphi (x):= \dfrac{1}{\widehat{g}\mathbf{\xi})} \nabla \cdot \left[\widehat{g}(\mathbf{\xi}) \widehat{F}^{-1}(\xi)\widehat{J}^T(\xi) \widehat{\varphi}(\mathbf{\xi}) \right] \circ \Phi^{-1}(x) .
\end{equation}
The Laplace-Beltrami operator on the manifold $\Omega$ is defined for a twice continuously differentiable function $\phi :\Omega \rightarrow \mathbb{R}$ as 
\begin{equation}
\label{eqn:laplacebeltrami}
  \Delta_{\Omega}\phi (x) =  \dfrac{1}{\widehat{g}(\mathbf{\xi})} \nabla \cdot \left[\widehat{g}(\mathbf{\xi}) \widehat{F}^{-1}(\mathbf{\xi}) 
  \nabla \widehat{\phi}(\mathbf{\xi})\right] \circ \Phi^{-1}(x) .
\end{equation}
The surface gradient of the Laplace-Beltrami operator on the manifold $\Omega$ is defined for a thrice continuously differentiable function $\phi :\Omega \rightarrow \mathbb{R}$ as 
\begin{equation}
\label{eqn:gradientlaplacebeltrami}
 \nabla_\Omega \Delta_{\Omega}\phi (x) = \left[ \widehat{J}(\mathbf{\xi})\widehat{F}^{-1}(\mathbf{\xi}) \nabla  \left( \dfrac{1}{\widehat{g}(\mathbf{\xi})} \nabla \cdot \left[\widehat{g}(\mathbf{\xi}) \widehat{F}^{-1}(\mathbf{\xi}) 
  \nabla \widehat{\phi}(\mathbf{\xi})\right]\right)\right] \circ \Phi^{-1}(x) .
\end{equation}
The unit normal vector on the manifold $\Omega \subset \mathbb{R}^3$ is obtained by the geometrical mapping of 
\begin{equation}
 \label{eqn:unitnormalvector}
  \widehat{\normal}(\xi) := \dfrac{\widehat{t}_1(\xi) \times \widehat{t}_2(\xi)}{\| \widehat{t}_1(\xi) \times \widehat{t}_2(\xi)\|},
\end{equation}
where $\widehat{t}_l(\xi):= \partial \Phi(\xi) / \partial \xi_l$ is the tangent vector to a curve in $\mathbb{R}^3$ with $l = 1,2.$ The manifold $\Omega$ has a tangent plane at $\xi$ if the tangent vectors are linearly independent.

Finally, by means of the geometrical mapping \eqref{eqn:geometricalmapping}, we can write the Jacobian, first fundamental form and the determinant on the computational domain $\Omega$ as follows
\begin{align}
 \label{eqn:differentialoperatormanifold}
  J(x)  = \widehat{J}(\xi) \circ \mathrm{\Phi}^{-1}(x) ,  \quad 
  F(x) = \widehat{F}(\xi) \circ \mathrm{\Phi}^{-1}(x)  \quad \text{and} \quad 
  g(x) = \widehat{g}(\xi) \circ \mathrm{\Phi}^{-1}(x).
\end{align}
}
\subsection{B-Spline, NURBS and Isogeometric Analysis}
\label{subsec:bsplineandnurbs}
For a comprehensive understanding of isogeometric analysis, we refer the reader to  \cite{CottrellHughesBazilevs:2009a} and the references therein. However, for the purpose of completion, we present in this article briefly some vital information necessary for the formulation and discussion of multipatch isogeometric analysis.
For positive integers $p$ and $n,$ let us define a vector $\mathrm{\Xi} = \left \{0=\xi_1,\ldots,\xi_{n+p+1}=1\right\}$ with a non-decreasing sequence of real numbers in the parameter domain $\widehat{\Omega} = [0,1]$ called a knot vector on the unit interval $\widehat{\Omega} = [0,1].$
Given $\mathrm{\Xi}$ with $p \geq 1$ and $n$ as the number of basis functions, the univariate B-spline basis functions are defined by the Cox -de Boor recursion formula 
\begin{align}
 \widehat{B}_{i,0}(\xi)  & = \left\{
   \begin{aligned}
     & 1 & \text{if} \quad & \xi_{i} \leq \xi < \xi_{i+1},\\
     & 0 & \text{else}, & \notag \\
   \end{aligned}
    \right. \\
     \widehat{B}_{i,p}(\xi)  & = \frac{\xi-\xi_{i}}{\xi_{i+p}
     - \xi_{i}}\widehat{B}_{i,p-1}(\xi)
     +\frac{\xi_{i+p+1}-\xi}{\xi_{i+p+1}-\xi_{i+1}}\widehat{B}_{i+1,p-1}(\xi),
\end{align}
where any division by zero is defined to be zero. 
We note that a basis function of degree $p$ is $(p-m)$ times continuously differentiable across a knot value with the multiplicity $m$. For example, if all internal knots have the multiplicity $m = 1$, then B-splines of degree $p$ are globally $(p-1)-$continuously differentiable.

In general for higher-dimensional problems, the B-spline basis functions are tensor products of the univariate B-spline basis functions. We define tesor product basis functions as follows: 
let $ \mathrm{\Xi}_{\alpha} = \left \{\xi_{1,\alpha},\ldots,\xi_{n_{\alpha}+p_{\alpha}+1,\alpha}\right\}$ be the knot vectors for every direction $\alpha = 1,\ldots,d.$
Let $\mathbf{i}:= (i_1,\ldots,i_{d}),  \mathbf{p}:= (p_1,\ldots,p_{d})$ and the set 
$\overline{\mathcal{I}}=\{\mathbf{i} = (i_1,\ldots,i_{d}) : i_\alpha = 1,2, \ldots, n_\alpha; \;   \alpha = 1,2,  \ldots, d\} $ be multi-indicies.  
Then the tensor product B-spline basis functions are defined by
\begin{equation}
\label{eqn:chp2:multivariatebspline}
   \widehat{B}_{\mathbf{i},\mathbf{p}}(\xi) := \prod\limits_{\alpha=1}^{d}\widehat{B}_{i_\alpha,p_\alpha}(\xi_{\alpha}),
\end{equation}
where $\xi = (\xi_1,\ldots,\xi_{d}) \in \widehat{\Omega} = (0,1)^d.$ The univariate and multivariate B-spline basis functions are defined in the parametric domain by means of the corresponding B-spline basis functions $\{ \widehat{B}_{\mathbf{i},\mathbf{p}} \}_{\mathbf{i} \in \overline{\mathcal{I}}}.$ 

The distinct values $\xi_i, i=1,\ldots,n$ of the knot vectors $\mathrm{\Xi}$ provides a partition of $(0,1)^d$ creating a mesh $\widehat{\mathcal{K}}_h$ in the parameter domain where $\widehat{K}$ is a mesh element. 
The computational domain is described by means of a geometrical mapping $\mathbf{\Phi}$ such that  $\Omega = \mathbf{\Phi}(\widehat{\Omega})$ and 
\begin{align}
  \mathbf{\Phi}(\xi) := \sum_{\mathbf{i} \in \overline{\mathcal{I}}} C_\mathbf{i} \widehat{B}_{\mathbf{i},\mathbf{p}}(\xi),
\end{align}
where $C_\mathbf{i} $ are the control points. 
Next, we describe NURBS bassis functions. These basis functions are usually prefered in industry due to their ability to exactly represent most shapes and particularly conic families. 
The NURBS basis functions are obtained from the B-spline basis functions by means of a geometrical mapping as follows
\begin{align}
  \mathbf{\Phi}(\xi) :=  \sum_{i=1}^n \widehat{N}_{\mathbf{i},\mathbf{p}}(\xi)\mathbf{P}_i \quad \text{and} \quad 
  \widehat{N}_{\mathbf{i},\mathbf{p}}(\xi) :=  \dfrac{w_i}{\sum_{\mathbf{j}=1}^n \widehat{B}_{\mathbf{j},\mathbf{p}}(\xi)}\widehat{B}_{\mathbf{i},\mathbf{p}}(\xi),
\end{align}
where $\mathbf{P}_i,i=1,\ldots,n$ are the control points in the physical space and $\widehat{N}_{\mathbf{i},\mathbf{p}}(\xi)$ are the NURBS basis functions obtained by projective transformation of the B-spline basis functions with weight $w_i \in \mathbb{R}$ and $n$ the number of basis functions.
The basis functions in the computational domain are defined by means of the geometrical mapping as 
$N_{\mathbf{i},\mathbf{p}} := \widehat{N}_{\mathbf{i},\mathbf{p}} \circ \mathbf{\Phi}^{-1}$ and the discrete function spaces by 
\begin{equation}
 \label{eqn:discretefunctionspace}
 \mathbb{V}_h = \text{span}\{N_{\mathbf{i},\mathbf{p}}: \mathbf{i} \in  \overline{\mathcal{I}} \}.
\end{equation}
In several real life applications, the computational domain $\Omega$ is usually decomposed into $N$ non-overlapping sub-domains $\Omega_i$ called patches denoted by $\mathcal{T}_h := \{ \Omega_i \}_{i=1}^N$ such that $\overline{\Omega} = \bigcup_{i=1}^N \overline{\Omega}_i$ and $ \Omega_i \cap \Omega_j = \emptyset$ for $i \neq j.$ Each patch is the image of an associated geometrical mapping $\mathbf{\Phi}_i$ such that $\mathbf{\Phi}_i(\widehat{\Omega})= \Omega_i, i=1,\ldots,N,$ see Figure~\ref{fig:multipatchigamap}.
  \begin{figure}[h]
  \centering
      \includegraphics[width=0.8\textwidth]{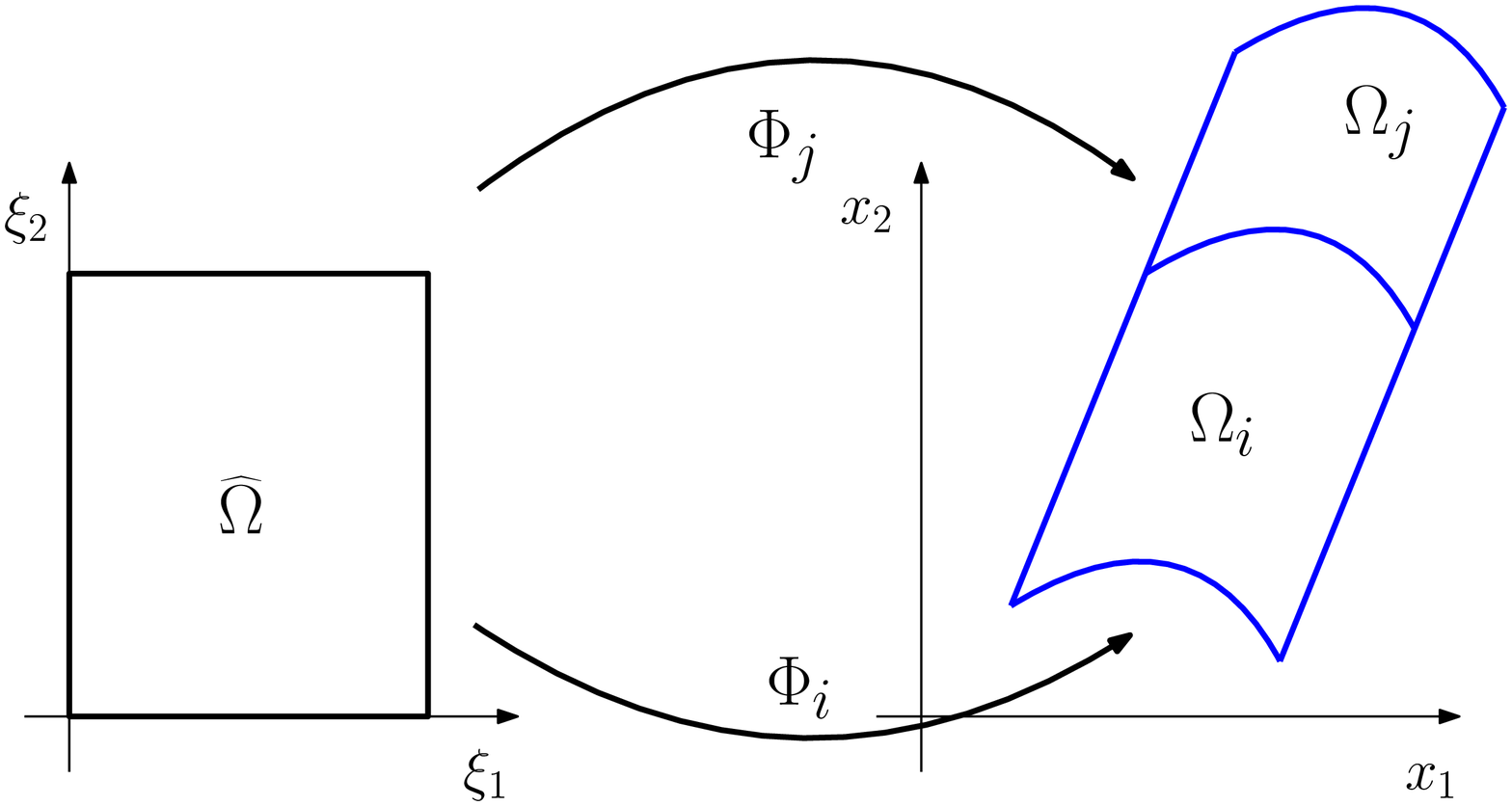}
      \caption{Illustration of the multi-patch isogeometric analysis map $\Phi_i (\widehat{\Omega}) =  \Omega_i$ and $\Phi_j (\widehat{\Omega}) =  \Omega_j, i 
\neq j.$}
      \label{fig:multipatchigamap}
 \end{figure}
We denote by $F_{ij} = \partial \Omega_i \cap \partial \Omega_j, i \neq j,$ the interior facets of two patches as illustrated in Figure~\ref{fig:multipatchiga}. The collection of all such interior facets is denoted by $\mathcal{F}_I,$ the set of Dirichlet facets $F_i$ on $\Gamma_D$ by $\mathcal{F}_D$ and the set of Neumann facets $F_i$ on $\Gamma_N$ by $\mathcal{F}_N.$ 
Furthermore, the collection of all internal, Dirichlet and Neumann facets is denoted by $\mathcal{F} := \mathcal{F}_I \cup \mathcal{F}_D \cup \mathcal{F}_N.$
\begin{figure}[th!]
  \centering
      \includegraphics[width=0.7\textwidth]{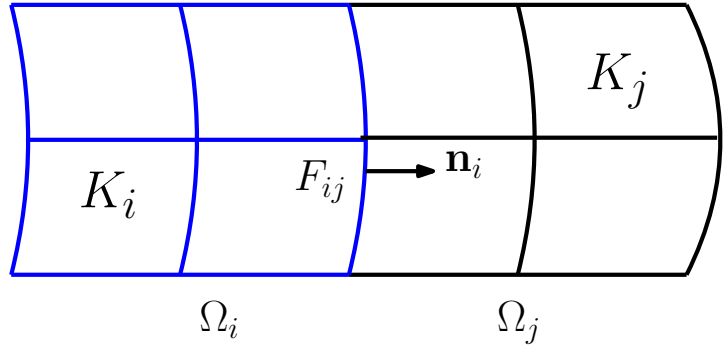}
      \caption{Illustration of the underlying mesh of the multi-patch isogeometric}
      \label{fig:multipatchiga}
\end{figure}

We assume that for each patch $\Omega_i, i=1,\ldots,N,$ the underlying mesh $\mathcal{K}_{h,i}$ is quasi-uniform i.e.  
\begin{align}
 \label{eqn:quasiuniformityassumption}
    h_K  \leq h_i \leq C_u h_K, \quad \text{for all} \quad K \in \mathcal{K}_{h,i}, \quad  i = 1,\ldots,N,
\end{align}
where $C_u \geq 1$ and $h_i = \max\{h_K, K \in \mathcal{K}_{h,i}\}$ is the mesh size of $\Omega_i$ and $h_K$ is the diameter of of the mesh element $K$. 
\section{Interior Penalty Variational Formulation}
\label{sec:InteriorPenaltyeqn:variationalformulation}
We introduce function spaces necessary for the derivation of interior penalty Galerkin schemes. Also, we assign to each patch $\Omega_i$ an integer $s_{ i}$ and collect them in the vector $\mathbf{s} = \{s_1,\ldots,s_N\}.$ We define the broken Sobolev space
\begin{equation}
 \label{eqn:brokensobolevspace}
 H^{\mathbf{s}}(\Omega,\mathcal{T}_h) :=\{v \in L_{2}(\Omega): v|_{\Omega_i} \in H^{s_i}(\Omega_i), \; i = 1,\ldots,N\},
\end{equation}
and the corresponding broken Sobolev norm and semi-norm
  \begin{align}
  \label{eqn:brokensobolevnorm}
    \|v \|_{H^{\mathbf{s}}(\Omega,\mathcal{T}_h)} := \left( \sum_{i=1}^N \|v \|^2_{H^{s_i}(\Omega_i)}\right)^{1/2} \quad \text{and} \quad 
     |v |_{H^{\mathbf{s}}(\Omega,\mathcal{T}_h)} := \left( \sum_{i=1}^N   |v |^2_{H^{s_i}(\Omega_i)}\right)^{1/2},
  \end{align} 
  respectively.
We define the jump and average of the normal derivatives across the interior facets $F_{ij} \in \mathcal{F}_I$ of $v \in H^{2}(\Omega,\mathcal{T}_h)$ by
\begin{align}
\label{eqn:jumpandaverage}
 \llbracket \nabla_{\Omega} v \rrbracket := \normal_i \cdot \nabla_\Omega v_i + \normal_j \cdot \nabla_\Omega v_j , \quad and \quad  \{\nabla_\Omega  v \} := \frac{1}{2}\left(\normal_i \cdot \nabla_\Omega v_i + \normal_j \cdot \nabla_\Omega v_j \right), 
 \end{align}
whereas the jump and average functions on the Dirichlet facets $F_i \in  \mathcal{F}_D$ are given by
 \begin{align}
 \label{eqn:dirichletjumpandaverage}
 \llbracket \nabla_\Omega v \rrbracket :=  \normal_i \cdot \nabla_\Omega v_i,  \quad and \quad  \{ \nabla_\Omega v \} :=  \normal_i \cdot \nabla_\Omega v_i. 
  \end{align} 
The following equality on the interior facets $F_{ij} \in \mathcal{F}_I$ is obtained by using the definitions of jumps and averages as 
 \begin{align}
 \label{eqn:dgmageicformula}
    \jump{ u v } & = \{a\} \jump{ b } + \{b\} \jump{a}, \quad \forall a,b \in \mathbb{R}.
 \end{align}
The interior penalty variational scheme reads: find $u \in H^4(\Omega,\mathcal{T}_h)$ such that,
\begin{equation}
\label{eqn:dgvariationalform}
	a_h (u,v) = \ell_h(v), \quad \forall v \in H^4(\Omega,\mathcal{T}_h),
\end{equation}
where the bilinear form is given by 
\begin{align}
\label{eqn:dgbilinearform}
	a_h(u,v) &= \sum_{i=1}^N \int_{\Omega_i} ( \Delta_{\Omega} u \Delta_{\Omega} v + u v ) \, dx \nonumber \\
	&- \sum_{F \in \mathcal{F}} \int_{F} \llbracket \nabla_{\Omega} v \rrbracket \{ \Delta_{\Omega} u\} \, ds 
	- \beta_0 \sum_{F \in \mathcal{F}} \int_{F} \llbracket \nabla_{\Omega} u \rrbracket \{ \Delta_{\Omega} v\} \, ds \nonumber \\
	&+ \sum_{F \in \mathcal{F}} \int_{F} \{\nabla_{\Omega} \Delta_{\Omega} u \} \llbracket v \rrbracket \, ds 
	+ \beta_1 \sum_{F \in \mathcal{F}} \int_{F} \{  \nabla_{\Omega} \Delta_{\Omega} v \} \llbracket u \rrbracket \, ds \nonumber \\ 
	&+ \sum_{F \in \mathcal{F}} \frac{\delta_1}{h_i^3} \int_{F} \llbracket u \rrbracket \llbracket v \rrbracket \, ds 
	+ \sum_{F \in \mathcal{F}} \frac{\delta_0}{h_i} \int_{F} \llbracket \nabla_{\Omega} u \rrbracket \llbracket \nabla_{\Omega} v \rrbracket  \, ds,
\end{align}
and the linear form reads as 
\begin{align}
\label{eqn:dglinearform}
	\ell_h(v) &= \sum_{i=1}^N \int_{\Omega_i} fv \, dx 
	          + \sum_{F_i \in \mathcal{F}_D} \int_{F_i} (\delta_1 v + \beta_1 \mathbf{n}_i \cdot \nabla_{\Omega} \Delta_{\Omega} v)g_0\,ds \notag \\
	         & + \sum_{F_i \in \mathcal{F}_D} \int_{F_i} (\delta_0 \mathbf{n}_i \cdot \nabla_{\Omega} v + \beta_0 \Delta_{\Omega} v)g_1\,ds 
	          + \sum_{F_i \in \mathcal{F}_N} \int_{F_i} (\mathbf{n}_i \cdot \nabla_{\Omega} v \,g_2  + v g_3 )\,ds.
\end{align}
The parameters $\beta_0, \beta_1 \in \{-1, 1\}$ and determine the interior penalty Galerkin (IPG) scheme. Here, we will describe the four main schemes a follows;
\begin{enumerate}
 \item $\beta_0=\beta_1=-1$ is the non-symmetric interior penalty Galerkin (NIPG)
 \item $\beta_0=\beta_1=1$  is the symmetric interior penalty Galerkin (SIGP)
 \item $\beta_0=-1, \beta_1=1$  is the semi-symmetric interior penalty Galerkin (SSIGP1)
 \item $\beta_0=1, \beta_1=-1$  is the semi-symmetric interior penalty Galerkin (SSIGP2)
\end{enumerate}

\begin{remark}
 \label{rem:penaltyparameter}
Although, there is no current literature on the choice of the penalty parameters for the bilinear form \eqref{eqn:dglinearform}, we choose $\delta_0=\delta_1=(p+1)(p+d)/d$ where $p$ is the degree of the NURBS and $d$ is the dimension of the computational surface i.e. $ d=3$ and $\Omega \subset \mathbb{R}^3.$ This choice of the penalty parameters yielded accurate simulations for the NIPG case presented in \cite{Moore:2018a}. 
\end{remark}
For the weak continuity of the fluxes on the interior facets, the exact solution $u$ must satisfy the following 
 \begin{align*}
   \llbracket u \rrbracket = 0, \llbracket \nabla_{\Omega} u \rrbracket = 0, 
   \llbracket \Delta_{\Omega} u \rrbracket = 0, \llbracket \nabla_{\Omega} \Delta_{\Omega} u \rrbracket = 0,\quad  \forall F \in \mathcal{F}. 
 \end{align*}
This enables us to show that the interior penalty Galerkin scheme is consistent, i.e. if $u \in  H^4(\Omega)$ is the solution of \eqref{eqn:variationalformulation}. Then $u$ is the solution to dGIGA variational identity \eqref{eqn:dgvariationalform}, see \cite{Moore:2018a} and \cite[Lemma~3]{SuliMozolevski:2004a}.
\section{Analysis of the dGIGA Scheme}
\label{sec:AnalysisOfDiscretizationError}
%
{\allowdisplaybreaks
In this section, we consider the existence and uniqueness of the bilinear form in the discrete setting. Thus, we require discrete spaces for the whole computational domain $\Omega.$ Since the domain consists of several subdomains or patches, we associate with each subdomain a discrete space as follows: 
Let us consider the B-Spline space $V_h \subset H^4(\Omega,\mathcal{T}_h)$ defined as
\begin{align}
 \label{eqn:nurbsspace}
  V_h := \{ v \in L_2(\Omega) : v|_{\Omega_i} \in \mathbb{V}_{h,i}, i=1,\ldots,N \},
\end{align}
where the B-Spline space $\mathbb{V}_{h,i}$ corresponds to the patch $\Omega_i, i=1,\ldots,N$ for B-splines of degree $p_i \geq 3.$
The discrete dGIGA scheme then reads as: find $u_h \in V_h$ such that 
\begin{align}
\label{eqn:discretedgvariationalform}
 a_h(u_h,v_h) = \ell(v_h), \quad \forall v_h \in V_h.
\end{align}
An immediate consequence of the consistency results as discussed in the latter part of section~\ref{sec:InteriorPenaltyeqn:variationalformulation} is the Galerkin orthogonality property, i.e., 
\begin{align}
\label{eqn:galerkinorthogonality}
    a_h(u - u_h,v_h) = 0, \quad \forall v_h \in V_h.
\end{align}
Next, for $v \in H^2(\Omega,\mathcal{T}_h),$ we show that the bilinear form 
$a_h(\cdot,\cdot),$ is coercive with respect to the following norm 
\begin{align}
 \label{eqn:dgnorm}
 \|v\|_h^2 := \sum_{i=1}^N \| \Delta_{\Omega} v \|^2_{L_2(\Omega_i)} 
            + \sum_{i=1}^N \| v \|_{L_{2}(\Omega_i)}^2
            + \sum_{F \in \mathcal{F}} \frac{\delta_1}{h_i^3} \|\llbracket v \rrbracket\|^2_{L_2(F)} 
            + \sum_{F \in \mathcal{F}} \frac{\delta_0}{h_i} \|\llbracket \nabla_{\Omega} v \rrbracket\|^2_{L_2(F)}.
\end{align}
\begin{remark}
For some function $v \in H^{2}(\Omega,\mathcal{T}_h),$ if $\|v\|_h = 0,$  then 
\begin{align}
 \label{eqn:interface}
  \Delta_{\Omega}  v & = 0 \quad \text{in} \quad \Omega_i \quad \text{for all} \quad i=1, \ldots, N \notag \\
  \llbracket v \rrbracket & = 0, \llbracket \nabla_{\Omega} v\rrbracket = 0 \quad on \quad F \in \mathcal{F}. 
\end{align}
Thus using the theory of elliptic interface problems $v = 0$ in the whole domain $\overline{\Omega}$ since it is a weak solution to \eqref{eqn:interface}. 
\end{remark}

The analysis of the dGIGA scheme requires the patch-wise inverse and trace inequalities given by the following lemmata, see \cite[chapter~2]{Moore:2017a}.
\begin{lemma}
\label{lem:inverseinequalities}
	Let $K_i \in \mathcal{K}_{h,i}, i =1,\ldots,N$. Then the inverse inequalities,
	\begin{align}
	\| \partial^j v \|_{L_2(\Omega_i)} &\leq C_{inv,1,u} h_i^{-1} \| \partial^{j-1} v \|_{L_2(\Omega_i)}, \label{eqn:inverseinequality-1}\\
	\|v \|_{L_2(\partial \Omega_i)} &\leq C_{inv,0,u} h_i^{-1/2} \| v \|_{L_2(\Omega_i)},
		\label{eqn:inverseinequality-2}
	\end{align}
hold for all $j \geq 1, v \in V_h,$ where $C_{inv,1,u}$ and $C_{inv,0,u}$ are positive constants, which are independent of $h_i$ and $\Omega_i$.
\end{lemma}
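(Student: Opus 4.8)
\emph{Proof sketch (plan).} The plan is to establish both estimates by the classical scaling argument: localize to a single mesh element, pull the function back to the parameter domain through the patch map $\mathrm{\Phi}_i$, use norm equivalence on a fixed finite-dimensional space on a reference cube, scale back, and finally sum over the elements of $\mathcal{K}_{h,i}$ invoking the quasi-uniformity assumption \eqref{eqn:quasiuniformityassumption}.

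First I would fix a patch $\Omega_i$ and an element $K \in \mathcal{K}_{h,i}$, with $\widehat{K} = \mathrm{\Phi}_i^{-1}(K)$ the corresponding element of the parametric mesh. Writing $\widehat{v} = v \circ \mathrm{\Phi}_i$, the restriction $\widehat{v}|_{\widehat{K}}$ belongs to the finite-dimensional NURBS space of degree $p_i$; since the NURBS weight $W_i = \sum_{\mathbf{j}} w_{\mathbf{j}}\widehat{B}_{\mathbf{j},\mathbf{p}}$ is a fixed, strictly positive, smooth function on $\widehat{\Omega}$, bounded above and below together with its derivatives, this space consists of rational functions with uniformly controlled smooth denominators, and differentiation maps it into a similarly controlled space. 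Next I would map $\widehat{K}$ affinely onto the reference cube $(0,1)^2$; on that fixed cube all norms on the (finite-dimensional) image space are equivalent, so there is a constant $\widehat{C}$, depending only on $p_i$, with $\|\widehat{\partial}^j \widehat{v}\|_{L_2((0,1)^2)} \le \widehat{C}\,\|\widehat{v}\|_{L_2((0,1)^2)}$ for every $j\ge 1$, and likewise $\|\widehat{v}\|_{L_2(\partial (0,1)^2)} \le \widehat{C}\,\|\widehat{v}\|_{L_2((0,1)^2)}$. Undoing the affine scaling introduces the expected powers of the parametric element size per derivative and a half-power for the boundary term; because $\mathrm{\Phi}_i$ is smooth with uniformly invertible Jacobian, the parametric mesh inherits quasi-uniformity from the physical one, so the parametric element size is comparable to $h_i$ with constants depending only on $C_u$ and on $\mathrm{\Phi}_i$.

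It then remains to transfer these parametric estimates back to the surface $\Omega_i$. Here I would use that the geometrical mapping \eqref{eqn:geometricalmapping} is smooth and regular, so $\widehat{J}$, $\widehat{F}^{-1}$, $\widehat{g}$ and $\widehat{g}^{-1}$, together with their derivatives, are bounded above and below uniformly on $\widehat{\Omega}$; consequently $\|\cdot\|_{L_2(\Omega_i)}$ and $\|\cdot\|_{L_2(\widehat{\Omega})}$ (and the corresponding boundary norms) are equivalent, and, expanding the tangential operators \eqref{eqn:tangentialgradient}, \eqref{eqn:laplacebeltrami}, \eqref{eqn:gradientlaplacebeltrami}, any tangential derivative $\partial^j v$ is a linear combination of parametric derivatives of $\widehat{v}$ up to order $j$ with smooth, uniformly bounded coefficients, and conversely. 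Combining this with the parametric inverse estimate, which lets one absorb the lower-order parametric derivatives into the top-order one at the cost of additional powers of $h_i^{-1}$, gives the element-wise bounds $\|\partial^j v\|_{L_2(K)} \le C h_K^{-1}\|\partial^{j-1} v\|_{L_2(K)}$ and $\|v\|_{L_2(\partial\Omega_i \cap \partial K)} \le C h_K^{-1/2}\|v\|_{L_2(K)}$. Summing over $K \in \mathcal{K}_{h,i}$ and replacing $h_K$ by $h_i$ via \eqref{eqn:quasiuniformityassumption} then yields \eqref{eqn:inverseinequality-1} and \eqref{eqn:inverseinequality-2}, with constants $C_{inv,1,u}$, $C_{inv,0,u}$ depending only on $p_i$, $C_u$ and the $C^k$-norms and ellipticity constant of $\mathrm{\Phi}_i$, hence independent of $h_i$.

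I expect the main obstacle to be the bookkeeping in the last step: keeping track of how the surface differential operators unfold into parametric derivatives with geometry-dependent coefficients, and ensuring that the passage from the clean inverse estimate on the reference cube to its surface counterpart preserves the $h$-scaling rather than contaminating it with the (harmless but nonconstant) metric factors. The rational nature of NURBS and the presence of the weight $W_i$ are a minor technical nuisance, handled once and for all by noting that $W_i$ and $1/W_i$ are smooth and bounded; see \cite[chapter~2]{Moore:2017a} and \cite{BazilevsBeiraoCottrellHughesSangalli:2006a} for the analogous computations.
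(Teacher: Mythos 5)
The paper itself offers no proof of this lemma---it only points to \cite[chapter~2]{Moore:2017a}---and your pull-back/affine-scaling/norm-equivalence argument, with quasi-uniformity \eqref{eqn:quasiuniformityassumption} and the bounded geometry map handling the surface metric factors, is exactly the standard proof given in that reference and in \cite{BazilevsBeiraoCottrellHughesSangalli:2006a}, so your route matches and is sound in outline. Two small caveats to record when writing it out: the reference-element constant (hence $C_{inv,1,u}$) inevitably depends on $j$ and on bounds for the NURBS weight, since the pulled-back rational space is not literally a fixed space (argue on the B-spline numerator and use that $W_i$ and $1/W_i$ are bounded), and the chain-rule bookkeeping for the mapped/surface derivatives naturally produces the full $H^{j-1}$-norm on the right-hand side of \eqref{eqn:inverseinequality-1} rather than the pure $(j-1)$-st derivative alone, which is how the estimate is in fact used later in the paper.
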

\begin{lemma}
 \label{lem:dgcoercivity}
 Let $a_h(\cdot,\cdot): V_h \times V_h \rightarrow \mathbb{R}$ be the discrete bilinear form defined in \eqref{eqn:discretedgvariationalform} with $\beta_0, \beta_1 \in \{-1,1\}$ and let
 $c_0$ and $c_1$ be nonnegative constants to be determined in the proof. Assume $\sigma_0 >0$ and $\sigma_1 >0$ and suppose that $\delta_0 \geq c_0$ and $\delta_1 \geq c_1.$ Then, there exists a positive constant $\mu_c$ such that 
 \begin{align}
  \label{eqn:dgcoercivity}
  a_h(v_h,v_h) \geq \mu_c \|v_h\|_h^2, \quad \forall v_h \in V_h.
 \end{align}
\end{lemma}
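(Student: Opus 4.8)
The plan is to evaluate $a_h(v_h,v_h)$ directly and bound the indefinite (facet) terms using the Cauchy–Schwarz and Young inequalities together with the trace and inverse inequalities of Lemma~\ref{lem:inverseinequalities}. First I would set $u=v=v_h$ in \eqref{eqn:dgbilinearform}. The two volume contributions $\sum_i\|\Delta_\Omega v_h\|^2_{L_2(\Omega_i)}$ and $\sum_i\|v_h\|^2_{L_2(\Omega_i)}$, along with the two penalty terms $\sum_F \tfrac{\delta_1}{h_i^3}\|\jump{v_h}\|^2_{L_2(F)}$ and $\sum_F \tfrac{\delta_0}{h_i}\|\jump{\nabla_\Omega v_h}\|^2_{L_2(F)}$, are already nonnegative and reproduce $\|v_h\|_h^2$ exactly. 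The work is therefore to control the four mixed facet integrals. With $\beta_0,\beta_1\in\{-1,1\}$ and $u=v=v_h$, the $\beta_0$-pair combines into $-(1+\beta_0)\sum_F\int_F \jump{\nabla_\Omega v_h}\{\Delta_\Omega v_h\}\,ds$ and the $\beta_1$-pair into $(1+\beta_1)\sum_F\int_F\{\nabla_\Omega\Delta_\Omega v_h\}\jump{v_h}\,ds$; in the non-symmetric case $\beta_0=\beta_1=-1$ these vanish identically, so only the symmetric and semi-symmetric cases need genuine estimation.

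Next I would estimate, for each facet $F\subset\partial\Omega_i$ (using quasi-uniformity \eqref{eqn:quasiuniformityassumption} to replace $h_K$ by $h_i$), the term $\bigl|\int_F \jump{\nabla_\Omega v_h}\{\Delta_\Omega v_h\}\,ds\bigr|$ by Cauchy–Schwarz as $\|\jump{\nabla_\Omega v_h}\|_{L_2(F)}\,\|\{\Delta_\Omega v_h\}\|_{L_2(F)}$, then apply the boundary trace inequality \eqref{eqn:inverseinequality-2} to $\Delta_\Omega v_h$ (which lies in a polynomial-type space on each patch, so the inequality applies), obtaining $\|\{\Delta_\Omega v_h\}\|_{L_2(F)}\le C_{inv,0,u}h_i^{-1/2}\|\Delta_\Omega v_h\|_{L_2(\Omega_i)}$ up to the averaging factor. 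Young's inequality with a parameter $\varepsilon$ then splits this into $\varepsilon\,C_{inv,0,u}^2 h_i^{-1}\|\Delta_\Omega v_h\|^2_{L_2(\Omega_i)} + \tfrac{1}{4\varepsilon}h_i^{-1}\|\jump{\nabla_\Omega v_h}\|^2_{L_2(F)}$; the second piece is absorbed by the $\delta_0$-penalty term provided $\delta_0$ is chosen large enough (this fixes $c_0$), but the first piece carries an unwanted factor $h_i^{-1}$ against $\|\Delta_\Omega v_h\|^2_{L_2(\Omega_i)}$, which is not in the norm — so I would instead keep a further inverse inequality \eqref{eqn:inverseinequality-1} in reserve, or, more cleanly, note that one only needs the volume term $\sum_i\|\Delta_\Omega v_h\|^2_{L_2(\Omega_i)}$ to dominate, which forces $\varepsilon$ to scale like $h_i$, i.e.\ one applies Young with weight $\varepsilon=\kappa h_i$ so the bad term becomes $\kappa C_{inv,0,u}^2\|\Delta_\Omega v_h\|^2_{L_2(\Omega_i)}$ and the penalty-absorbed term becomes $\tfrac{1}{4\kappa h_i}\|\jump{\nabla_\Omega v_h}\|^2_{L_2(F)}\le\tfrac{1}{4\kappa}\cdot\tfrac{1}{h_i}\|\jump{\nabla_\Omega v_h}\|^2_{L_2(F)}$, absorbed once $\delta_0\ge c_0:=1/(2\kappa)$ say. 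An entirely analogous argument, now applying the trace inequality to $\nabla_\Omega\Delta_\Omega v_h$ and two inverse inequalities to pull out the $h_i^{-3}$ scaling, handles $\bigl|\int_F\{\nabla_\Omega\Delta_\Omega v_h\}\jump{v_h}\,ds\bigr|$ and is absorbed by the $\delta_1$-penalty term, fixing $c_1$.

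Collecting terms, I would arrive at
\[
a_h(v_h,v_h)\ \ge\ (1-C\kappa)\sum_i\|\Delta_\Omega v_h\|^2_{L_2(\Omega_i)} + \sum_i\|v_h\|^2_{L_2(\Omega_i)} + \tfrac12\sum_F\tfrac{\delta_1}{h_i^3}\|\jump{v_h}\|^2_{L_2(F)} + \tfrac12\sum_F\tfrac{\delta_0}{h_i}\|\jump{\nabla_\Omega v_h}\|^2_{L_2(F)},
\]
and choosing $\kappa$ small enough that $1-C\kappa\ge\tfrac12$ yields \eqref{eqn:dgcoercivity} with $\mu_c=\tfrac12$. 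The main obstacle is bookkeeping the scaling powers of $h_i$ consistently: the penalty weights $h_i^{-1}$ and $h_i^{-3}$ must exactly match what the trace-plus-inverse-inequality chain produces for $\{\Delta_\Omega v_h\}$ and $\{\nabla_\Omega\Delta_\Omega v_h\}$ respectively, and one must be careful that the averaging operator on interior facets only contributes harmless constant factors (a factor $1/2$ in each summand and, after squaring a sum of two patch contributions, a factor at most $2$). The non-symmetric case $\beta_0=\beta_1=-1$ is trivial ($\mu_c=1$, no condition on $\delta_0,\delta_1$ beyond positivity), while the symmetric and semi-symmetric cases require the smallness/largeness conditions $\delta_0\ge c_0$, $\delta_1\ge c_1$ stated in the hypothesis.
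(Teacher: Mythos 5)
Your argument is correct and follows essentially the same route as the paper: set $u_h=v_h$, combine the mixed facet terms into factors $(1+\beta_0)$ and $(1+\beta_1)$ (so NIPG is immediate), then use Cauchy--Schwarz, the inverse/trace inequalities of Lemma~\ref{lem:inverseinequalities} and Young's inequality with $h_i$-scaled weights so the facet contributions are absorbed by the volume term $\sum_i\|\Delta_\Omega v_h\|^2_{L_2(\Omega_i)}$ and by the $\delta_0/h_i$ and $\delta_1/h_i^3$ penalty terms under thresholds $\delta_0\ge c_0$, $\delta_1\ge c_1$. This matches the paper's proof, which parameterizes the same splitting via $\sigma_0,\sigma_1$ and obtains $\mu_c=1-\tfrac{\sigma_0}{2}(1+\beta_0)-\tfrac{\sigma_1}{2}(1+\beta_1)$; your only loose point is the count of inverse inequalities for the $\{\nabla_\Omega\Delta_\Omega v_h\}$ term (one boundary-trace-type plus one derivative-type inequality gives the required $h_i^{-3}$ after squaring), which does not affect the conclusion.
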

\begin{proof}
By setting $u_h=v_h$ in \eqref{eqn:dgbilinearform}, 
we proceed as follows
 \begin{align}
 \label{eqn:coercive-proof}
  	 a_h(v_h,v_h)   &= \sum_{i=1}^N \int_{\Omega_i} (\Delta_{\Omega} v_h)^2 \, dx 
	+ \sum_{F \in \mathcal{F}} \frac{\delta_1}{h_i^3} \int_{F} \llbracket v_h \rrbracket^2 \, ds  + \sum_{F \in \mathcal{F}} \frac{\delta_0}{h_i} \int_{F} \llbracket \nabla_{\Omega} v_h \rrbracket^2  \, ds \notag \\
	& - ( 1 +\beta_0)\sum_{F \in \mathcal{F}} \int_{F} \llbracket \nabla_{\Omega} v_h \rrbracket \{ \Delta_{\Omega} v_h\} \, ds
	  + (1+\beta_1 )\sum_{F \in \mathcal{F}} \int_{F} \llbracket v_h \rrbracket \{\nabla_{\Omega} \Delta_{\Omega}  v_h\} \, ds \nonumber \\
	& = \sum_{i=1}^N \|\Delta_{\Omega} v_h \|^2_{L_2(\Omega_i)} 
	+ \sum_{F \in \mathcal{F}} \frac{\delta_1}{h_i^3} \| \llbracket v_h \rrbracket \|^2_{L_2(F)}  + \sum_{F \in \mathcal{F}} \frac{\delta_0}{h_i} \|\llbracket \nabla_{\Omega} v_h \rrbracket\|^2_{L_2(F)} \notag \\ 
	& - (1 + \beta_0 )\sum_{F \in \mathcal{F}} \| \llbracket \nabla_{\Omega} v_h \rrbracket \|_{L_2(F)} \|\{ \Delta_{\Omega} v_h\} \|_{L_2(F)} \, \nonumber \\
	& + (1 + \beta_1 )\sum_{F \in \mathcal{F}} \| \llbracket v_h \rrbracket \|_{L_2(F)} \|\{\nabla_{\Omega} \Delta_{\Omega}  v_h\}\|_{L_2(F)}.
 \end{align}
Using Cauchy-Schwarz's inequality, Lemma~\ref{lem:inverseinequalities} and Young's inequality, we obtain
 \begin{align}
  & (1 + \beta_0 )\sum_{F \in \mathcal{F}} \| \llbracket \nabla_{\Omega} v_h \rrbracket \|_{L_2(F)} \|\{ \Delta_{\Omega} v_h\} \|_{L_2(F)} \notag \\
  & \leq  (1 + \beta_0) \left( \sum_{F \in \mathcal{F}} \dfrac{ C^2_{inv,0}}{2 h_i \sigma_0} \| \llbracket \nabla_{\Omega} v_h \rrbracket \|_{L_2(F)}^2 + \dfrac{\sigma_0}{2 } \sum_{i=1}^N \| \Delta_{\Omega} v_h \|_{L_2(\Omega_i)}^2  \right). \label{eqn:coerciveterm-1} \\
    & (1 + \beta_1 )\sum_{F \in \mathcal{F}} \| \llbracket  v_h \rrbracket \|_{L_2(F)} \|\{\nabla_{\Omega}  \Delta_{\Omega} v_h\} \|_{L_2(F)} \notag \\
  & \leq (1 + \beta_1) \left(\sum_{F \in \mathcal{F}}  \dfrac{ C^2_{inv,0} }{2 h_i^3\sigma_1 } \| \llbracket v_h \rrbracket \|_{L_2(F)}^2 + \dfrac{\sigma_1}{2 } \sum_{i=1}^N  \|\Delta_{\Omega} v_h \|_{L_2(\Omega_i)}^2  \right).  \label{eqn:coerciveterm-2}
 \end{align}
Substituting \eqref{eqn:coerciveterm-1} and \eqref{eqn:coerciveterm-2} into \eqref{eqn:coercive-proof} yields 
\begin{align}
 \label{eqn:coerciveform}
  a_h(v_h,v_h) & \geq \sum_{i=1}^N \bigg( 1 - \frac{\sigma_0}{2}(1+\beta_0) - \frac{\sigma_1}{2}(1+\beta_1) \bigg)\|\Delta_{\Omega}\|_{L_2(\Omega_i)}^2 \nonumber \\
  & + \sum_{F \in \mathcal{F}} \bigg(\delta_0 - \frac{ C^2_{inv,0,1}}{2\sigma_0}(1+\beta_0) \bigg) \| \llbracket \nabla_{\Omega} v_h \rrbracket \|_{L_2(F)}^2 \nonumber \\
  & + \sum_{F \in \mathcal{F}} \bigg(\delta_1 - \frac{ C^2_{inv,0}}{2\sigma_1}(1+\beta_1) \bigg) \| \llbracket v_h \rrbracket \|_{L_2(F)}^2.
\end{align}
The ellipticity constant is given by
\begin{equation*}
 \mu_c = 1 - \frac{\sigma_0}{2}(1+\beta_0) - \frac{\sigma_1}{2}(1+\beta_1),
\end{equation*}
and determined by the choices of $\sigma_0$ and $\sigma_1$ as well as $\beta_0$ and $\beta_1.$ The last two terms \eqref{eqn:coerciveform} hold for 
 $\delta_0 \geq  c_0 $ and $\delta_1 \geq c_1$ 
where 
\begin{equation*}
c_0 = \frac{(1+\beta_0) }{2\sigma_0}C^2_{inv,0,1} \quad \text{and} \quad 
c_1 = \frac{(1+\beta_1)}{2\sigma_1} C^2_{inv,0},
\end{equation*}
which completes the proof.
\qed
\end{proof}
Finally, we obtain the coercivity of the bilinear forms and the corresponding penalty parameters $\delta_0$ and $\delta_1$ in the next theorem.
\begin{theorem}
\label{thm:coercivity}
 Let $a_h(\cdot,\cdot)$ be the discrete bilinear form defined in \eqref{eqn:discretedgvariationalform} with positive constants $\delta_0$ and $\delta_1$
 and let $c_0$ and $c_1$ be nonnegative constants as in Lemma~\ref{lem:dgcoercivity}. Suppose that for 
Then there exists a positive constant $\mu_c$ such that 
 \begin{align*}
  a_h(v_h,v_h) \geq \mu_c \|v_h\|_h^2, \quad \forall v_h \in V_h.
 \end{align*}
\end{theorem}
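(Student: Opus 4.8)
The plan is to deduce \thmref{thm:coercivity} directly from \lemref{lem:dgcoercivity} by making explicit, admissible choices of the auxiliary parameters $\sigma_0$ and $\sigma_1$ that were left free in the lemma, and then reading off the resulting constraints on the penalty parameters $\delta_0$ and $\delta_1$. The only real content of the theorem beyond the lemma is to verify that in each of the four IPG regimes ($\beta_0,\beta_1 \in \{-1,1\}$) one can indeed pick $\sigma_0,\sigma_1 > 0$ so that the ellipticity constant $\mu_c = 1 - \tfrac{\sigma_0}{2}(1+\beta_0) - \tfrac{\sigma_1}{2}(1+\beta_1)$ is strictly positive, and then that the conditions $\delta_0 \geq c_0$, $\delta_1 \geq c_1$ from \lemref{lem:dgcoercivity} can be met by positive $\delta_0,\delta_1$.

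First I would split into cases. In the non-symmetric case $\beta_0 = \beta_1 = -1$, the two cross terms in \eqref{eqn:coercive-proof} vanish identically, so $\mu_c = 1$ and $c_0 = c_1 = 0$; any positive $\delta_0, \delta_1$ work, which recovers coercivity trivially. In the semi-symmetric cases exactly one of $1+\beta_0$, $1+\beta_1$ vanishes: say $\beta_0 = -1$, $\beta_1 = 1$ (SSIGP1), then $\mu_c = 1 - \sigma_1$, so choosing $\sigma_1 = 1/2$ gives $\mu_c = 1/2 > 0$, $c_0 = 0$, and $c_1 = C_{inv,0}^2/\sigma_1 = 2C_{inv,0}^2$; then any $\delta_0 > 0$ and $\delta_1 \geq 2C_{inv,0}^2$ suffice. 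The symmetric case $\beta_0 = \beta_1 = 1$ (SIPG) is the binding one: $\mu_c = 1 - \sigma_0 - \sigma_1$, so I would take $\sigma_0 = \sigma_1 = 1/4$, giving $\mu_c = 1/2$, $c_0 = C_{inv,0,1}^2/\sigma_0 = 4C_{inv,0,1}^2$ and $c_1 = 4C_{inv,0}^2$; coercivity then holds for $\delta_0 \geq 4C_{inv,0,1}^2$ and $\delta_1 \geq 4C_{inv,0}^2$. In every case the hypotheses of \lemref{lem:dgcoercivity} ($\sigma_0,\sigma_1 > 0$, $\delta_0 \geq c_0$, $\delta_1 \geq c_1$) are satisfied, so its conclusion $a_h(v_h,v_h) \geq \mu_c \|v_h\|_h^2$ transfers verbatim, proving the theorem.

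A second, slightly cleaner route I would consider is to treat all four cases uniformly by observing that $(1+\beta_0), (1+\beta_1) \in \{0,2\}$, so setting $\sigma_0 = \sigma_1 = 1/4$ always yields $\mu_c \geq 1 - \tfrac{1}{4}\cdot 2 - \tfrac{1}{4}\cdot 2 = 0$ with equality only in SIPG-type combinations — hence I would instead fix $\sigma_0 = \sigma_1 = 1/8$ to guarantee $\mu_c \geq 1/2$ across the board, and correspondingly $c_0 = 4(1+\beta_0)C_{inv,0,1}^2 \leq 8C_{inv,0,1}^2$, $c_1 = 4(1+\beta_1)C_{inv,0}^2 \leq 8C_{inv,0}^2$; then the single choice $\delta_0 \geq 8C_{inv,0,1}^2$, $\delta_1 \geq 8C_{inv,0}^2$ works for all four schemes simultaneously. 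This avoids case analysis entirely.

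The main obstacle here is not mathematical depth — the estimate is an immediate corollary — but rather that the statement of \thmref{thm:coercivity} as printed is incomplete ("Suppose that for\ Then there exists..."), so the proof must first fill in the intended hypothesis, namely that $\delta_0 \geq c_0$ and $\delta_1 \geq c_1$ for the constants $c_0, c_1$ associated (via the chosen $\sigma_0, \sigma_1$) to the particular values of $\beta_0, \beta_1$ in use. I would also be careful about the mild notational inconsistency between $C_{inv,0}$ and $C_{inv,0,1}$ and the constant $C_{inv,0,u}$ from \lemref{lem:inverseinequalities}; the proof should simply identify all of these with the trace-inverse constant of \eqref{eqn:inverseinequality-2} (absorbing the factor $1/2$ from the average into the constant) and note that it is independent of $h_i$ and of the patch, so the resulting thresholds on $\delta_0, \delta_1$ are $h$-independent as required.
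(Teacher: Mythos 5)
Your proposal is correct and follows the same route as the paper, whose proof of Theorem~\ref{thm:coercivity} is simply the observation that it follows from Lemma~\ref{lem:dgcoercivity} once $\beta_0,\beta_1$ (and hence admissible $\sigma_0,\sigma_1$ and the thresholds $c_0,c_1$ on $\delta_0,\delta_1$) are fixed; your explicit case-by-case choices just spell this out. One harmless slip: in your ``uniform'' variant the bound $1-\tfrac{1}{4}\cdot 2-\tfrac{1}{4}\cdot 2=0$ drops the factor $\tfrac{1}{2}$ in $\tfrac{\sigma_0}{2}(1+\beta_0)$, so $\sigma_0=\sigma_1=1/4$ already gives $\mu_c\geq 1/2$; your more conservative choice $\sigma_0=\sigma_1=1/8$ of course still works.
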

\begin{proof}
 The proof follows from Lemma~\ref{lem:dgcoercivity} and the method is determined by the choice of $\beta_0$ and $\beta_1.$ \qed
\end{proof}

From Theorem~\ref{thm:coercivity}, we obtain the uniqueness of the solution of the discrete variational problem \eqref{eqn:discretedgvariationalform}. Since the discrete variational problem is in the finite dimensional space $V_h,$ the uniqueness therefore yields the existence of 
the solution $u_h \in V_h$ of \eqref{eqn:discretedgvariationalform}. 
\begin{lemma}
	\label{lem:scaledtraceinequality}
	Let $K \in \mathcal{K}_{h,i},i = 1,\ldots,N$ and $\widehat K = \Phi_i^{-1}(K)$. Then the scaled trace inequality
	\begin{align}
	\label{eqn:scaledtraceinequality}
		\|v\|_{L_2(\partial \Omega_i)} \leq C_{tr,u}h_i^{-1/2} \left( \|v\|_{L_2(\Omega_i)} + h_i |v|_{H^1(\Omega_i)} \right),
	\end{align}
holds for all $v \in H^1(\Omega_i),$ where $h_i$ denotes the global mesh size of patch $\Omega_i$ in the physical domain, and $C_{tr,u}$ is a positive constant that only depends on the quasi-uniformity and shape regularity of the mapping $\Phi_i$.
\end{lemma}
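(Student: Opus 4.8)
The plan is to transport the estimate to a fixed reference element, invoke the classical trace theorem there, and carry the result back to the physical patch while tracking the powers of $h_i$ introduced by the rescaling. Observe first that this is a genuine strengthening of the discrete bound \eqref{eqn:inverseinequality-2}, which holds only for $v\in V_h$ and rests on an inverse inequality; here $v$ is an arbitrary $H^1$ function on $\Omega_i$, so one must pay the additional term $h_i|v|_{H^1(\Omega_i)}$ and argue by transformation rather than by an inverse estimate.

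First I would recall the trace theorem on the unit reference square $\widehat K_{\mathrm{ref}}\subset\mathbb{R}^2$: there is a constant $\widehat C_{tr}$ depending only on $\widehat K_{\mathrm{ref}}$ such that $\|\widehat v\|_{L_2(\partial\widehat K_{\mathrm{ref}})}\le \widehat C_{tr}\bigl(\|\widehat v\|_{L_2(\widehat K_{\mathrm{ref}})}+|\widehat v|_{H^1(\widehat K_{\mathrm{ref}})}\bigr)$ for all $\widehat v\in H^1(\widehat K_{\mathrm{ref}})$, see e.g. \cite{AdamsFournier:2008}. Fixing $K\in\mathcal{K}_{h,i}$ and composing the patch map $\Phi_i^{-1}$ with an affine map onto $\widehat K_{\mathrm{ref}}$ yields a bijection $\psi\colon\widehat K_{\mathrm{ref}}\to K\subset\Omega_i\subset\mathbb{R}^3$ whose Jacobian $\psi'$ factors as $\widehat J_i$ composed with the affine scaling. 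By the quasi-uniformity assumption \eqref{eqn:quasiuniformityassumption} and the shape regularity of $\Phi_i$ (uniform two-sided bounds on $\widehat J_i$, $\widehat F_i$, $\widehat g_i$ and $\widehat F_i^{-1}$) one has $\|\psi'\|\sim h_i$ and the first fundamental form $\psi'^{T}\psi'$ is comparable to $h_i^{2}I$, with implied constants depending only on $C_u$ and $\Phi_i$.

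Next I would carry out the change of variables $\widehat v:=v\circ\psi$. The surface area element on $K$ then scales like $h_i^{2}$, the arc-length element on $\partial K$ like $h_i$, and — using \eqref{eqn:tangentialgradient} applied to $\psi$, so that $|\nabla_\Omega v|^2=(\nabla\widehat v)^{T}(\psi'^{T}\psi')^{-1}\nabla\widehat v\sim h_i^{-2}|\nabla\widehat v|^2$ — the $H^1$-seminorm turns out to be \emph{scale invariant}, as is typical in two space dimensions. Concretely, $\|\widehat v\|_{L_2(\widehat K_{\mathrm{ref}})}\sim h_i^{-1}\|v\|_{L_2(K)}$, $|\widehat v|_{H^1(\widehat K_{\mathrm{ref}})}\sim|v|_{H^1(K)}$ and $\|\widehat v\|_{L_2(\partial\widehat K_{\mathrm{ref}})}\sim h_i^{-1/2}\|v\|_{L_2(\partial K)}$. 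Inserting these into the reference trace theorem and multiplying by $h_i^{1/2}$ produces the elementwise estimate $\|v\|_{L_2(\partial K)}\le C\,h_i^{-1/2}\bigl(\|v\|_{L_2(K)}+h_i|v|_{H^1(K)}\bigr)$ for all $v\in H^1(K)$, with $C=C(C_u,\Phi_i)$.

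Finally I would square this, sum over $K\in\mathcal{K}_{h,i}$, and use that $\partial\Omega_i\subseteq\bigcup_K\partial K$ on the left while $\sum_K\|v\|_{L_2(K)}^2=\|v\|_{L_2(\Omega_i)}^2$ and $\sum_K|v|_{H^1(K)}^2=|v|_{H^1(\Omega_i)}^2$ on the right, obtaining $\|v\|_{L_2(\partial\Omega_i)}^2\le C^2 h_i^{-1}\bigl(\|v\|_{L_2(\Omega_i)}^2+h_i^{2}|v|_{H^1(\Omega_i)}^2\bigr)$; taking square roots and using $\sqrt{a^2+b^2}\le a+b$ gives the claim with $C_{tr,u}=\sqrt{2}\,C$. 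The main obstacle is the change-of-variables bookkeeping: because $\Omega_i$ is a curved surface in $\mathbb{R}^3$ rather than a flat subdomain, one has to verify with care that the surface area element, the boundary arc-length element, and the tangential-gradient transformation built from $\widehat J_i\widehat F_i^{-1}$ are each bounded above and below by the stated powers of $h_i$, uniformly in $i$ and in $K$; the remainder of the argument is routine.
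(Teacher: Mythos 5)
Your argument is correct and is exactly the standard scaling proof of this inequality: pull back each element adjacent to $\partial\Omega_i$ to the reference square via $\Phi_i$ composed with an affine scaling, apply the $H^1$ trace theorem there, track the powers of $h_i$ coming from the area, arc-length and tangential-gradient transformations (using the regularity of $\widehat J_i$, $\widehat F_i$ and quasi-uniformity \eqref{eqn:quasiuniformityassumption}), and sum over elements. The paper itself states the lemma without proof, deferring to the cited reference, where the same reference-element argument is used, so your proposal fills in precisely that omitted proof.
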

To enable us derive uniform boundedness of the bilinear form $a_h(\cdot,\cdot):V_{h,*} \times V_h \rightarrow \mathbb{R},$ where $V_{h,*} := V_D \cap H^\mathbf{s}(\Omega,\mathcal{T}_h) +  V_h$ with $\mathbf{s} \geq 4$ and equipped with the norm
\begin{align}
 \label{eqn:dgnorm*}
 \|v\|_{h,*}^2 = \|v\|_h^2  
               + \sum_{F \in \mathcal{F}} \frac{h_i^3}{\delta_1} \|\{ \nabla_{\Omega} \Delta_{\Omega} v \}\|^2_{L_2(F)} 
               + \sum_{F \in \mathcal{F}} \frac{h_i}{\delta_0} \|\{ \Delta_{\Omega} v \}\|^2_{L_2(F)}.
\end{align}
Indeed, the norm \eqref{eqn:dgnorm} is also a norm on $H^4(\Omega,\mathcal{T}_h)$ since $H^4(\Omega,\mathcal{T}_h) \subset H^2(\Omega,\mathcal{T}_h).$ 
\begin{lemma}
 \label{lem:dgboundedness}
 Let $a_h(\cdot,\cdot) : V_{h,*} \times V_{h}$ be the bilinear form defined in \eqref{eqn:dgbilinearform} with $\beta_0, \beta_1 \in \{-1,1\}$ and $\delta_0, \delta_1 > 0.$ Then there exists a positive constant $\mu_b,$ such that 
 \begin{align}
  \label{eqn:dgboundedness}
  | a_h(u,v_h)| \leq \mu_b \|u\|_{h,*} \|v_h\|_h, \quad \forall u \in V_{h,*},  v_h \in V_h.
 \end{align}
\end{lemma}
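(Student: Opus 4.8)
**Proof proposal for Lemma (boundedness of $a_h$ on $V_{h,*}\times V_h$).**

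The plan is to estimate each of the seven groups of terms in the definition \eqref{eqn:dgbilinearform} of $a_h(u,v_h)$ separately, and in each case bound the piece by a product of a factor controlled by $\|u\|_{h,*}$ and a factor controlled by $\|v_h\|_h$. First I would treat the volume contribution $\sum_i \int_{\Omega_i}(\Delta_\Omega u\,\Delta_\Omega v_h + u v_h)\,dx$: two applications of the Cauchy--Schwarz inequality (first on each $\Omega_i$, then over the index $i$) give the bound $\big(\sum_i \|\Delta_\Omega u\|^2_{L_2(\Omega_i)} + \|u\|^2_{L_2(\Omega_i)}\big)^{1/2}\big(\sum_i \|\Delta_\Omega v_h\|^2_{L_2(\Omega_i)}+\|v_h\|^2_{L_2(\Omega_i)}\big)^{1/2}$, which is $\le \|u\|_h\|v_h\|_h \le \|u\|_{h,*}\|v_h\|_h$. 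This is the routine part.

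Next I would handle the four "consistency-type" facet terms. Consider for instance $\sum_{F}\int_F \llbracket\nabla_\Omega v_h\rrbracket\{\Delta_\Omega u\}\,ds$. On each facet $F$ I insert the weight $h_i^{-1/2}$ appropriately: write the integrand as $\big(h_i^{-1/2}\llbracket\nabla_\Omega v_h\rrbracket\big)\big(h_i^{1/2}\{\Delta_\Omega u\}\big)$, apply Cauchy--Schwarz on $F$ and then over $\mathcal F$, and absorb the constant $\delta_0$: the first factor sums to $\big(\sum_F \delta_0 h_i^{-1}\|\llbracket\nabla_\Omega v_h\rrbracket\|^2_{L_2(F)}\big)^{1/2}\le \|v_h\|_h$ up to a constant, while the second factor is exactly $\big(\sum_F \delta_0^{-1}h_i\|\{\Delta_\Omega u\}\|^2_{L_2(F)}\big)^{1/2}$, one of the two extra terms in the definition \eqref{eqn:dgnorm*} of $\|u\|_{h,*}$. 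The symmetric term $\beta_0\sum_F\int_F\llbracket\nabla_\Omega u\rrbracket\{\Delta_\Omega v_h\}\,ds$ requires the reverse split: here the jump $\llbracket\nabla_\Omega u\rrbracket$ is controlled by $\|u\|_{h,*}$ (through $\|u\|_h$), whereas the average $\{\Delta_\Omega v_h\}$ lives on the discrete side — and since $v_h\in V_h$, I can invoke Lemma~\ref{lem:inverseinequalities} (the inverse/trace inequality \eqref{eqn:inverseinequality-2}) together with the quasi-uniformity \eqref{eqn:quasiuniformityassumption} to bound $h_i^{1/2}\|\{\Delta_\Omega v_h\}\|_{L_2(F)}$ by $C\|\Delta_\Omega v_h\|_{L_2(\Omega_i)}\le C\|v_h\|_h$. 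The two terms involving $\{\nabla_\Omega\Delta_\Omega\cdot\}\llbracket\cdot\rrbracket$ are handled identically, now pairing the weight $h_i^{-3/2}$ with $h_i^{3/2}$, using the $h_i^3/\delta_1$-term of $\|u\|_{h,*}$ for the term where $u$ carries the third derivative, and the inverse inequality applied twice (or equivalently \eqref{eqn:inverseinequality-1} iterated) for the term where $v_h$ carries it. Finally, the two genuine penalty terms $\sum_F \tfrac{\delta_1}{h_i^3}\int_F\llbracket u\rrbracket\llbracket v_h\rrbracket\,ds$ and $\sum_F\tfrac{\delta_0}{h_i}\int_F\llbracket\nabla_\Omega u\rrbracket\llbracket\nabla_\Omega v_h\rrbracket\,ds$ are split symmetrically, each factor being directly one of the penalty contributions in $\|\cdot\|_h$, so these are bounded by $\|u\|_h\|v_h\|_h$.

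Collecting all seven estimates and taking $\mu_b$ to be a (fixed) multiple of the worst of the constants $C_{inv,0,u}$, $C_{tr,u}$, $C_u$ appearing above yields \eqref{eqn:dgboundedness}. The main obstacle — and the only place where asymmetry of the two arguments genuinely matters — is the treatment of the four consistency terms: for the averages of $\Delta_\Omega$ and $\nabla_\Omega\Delta_\Omega$ acting on $u\in V_{h,*}$ one cannot use an inverse inequality (since $u$ need not be a spline), which is precisely why the norm $\|\cdot\|_{h,*}$ had to be augmented by the two extra facet terms in \eqref{eqn:dgnorm*}; conversely, when the same averages act on $v_h\in V_h$ one must fall back on Lemma~\ref{lem:inverseinequalities} and the quasi-uniformity assumption to trade the negative power of $h_i$ on the facet for an interior $L_2$-norm. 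Keeping careful track of which of $u$ or $v_h$ carries the high-order derivative in each of the four terms, and choosing the corresponding Cauchy--Schwarz split accordingly, is the crux of the argument; everything else is bookkeeping with constants.
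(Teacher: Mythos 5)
Your proposal is correct and follows essentially the same route as the paper: Cauchy--Schwarz with the $h_i^{\pm 1/2}$, $h_i^{\pm 3/2}$ weightings (absorbing the averages of $\Delta_\Omega u$ and $\nabla_\Omega\Delta_\Omega u$ into the extra facet terms of $\|\cdot\|_{h,*}$), and the inverse inequalities \eqref{eqn:inverseinequality-1}--\eqref{eqn:inverseinequality-2} applied to the terms where $v_h\in V_h$ carries $\{\Delta_\Omega v_h\}$ or $\{\nabla_\Omega\Delta_\Omega v_h\}$, exactly as in the paper's treatment of its third and fifth terms. Your identification of why the asymmetric norm $\|\cdot\|_{h,*}$ is needed matches the paper's reasoning, so only the bookkeeping of the final constant $\mu_b$ differs.
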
 
\begin{proof}
The proof follows by using the he Cauchy-Schwarz inequality to estimate the terms in the bilinear form \eqref{eqn:dgbilinearform}. However, concerning the concerning the third term, we apply the inverse inequality \eqref{eqn:inverseinequality-2} for $v_h \in V_h$ as follows
  \begin{align*}
 	 \bigg| \sum_{F \in \mathcal{F}} \int_{F} \{ \Delta_{\Omega} v_h \} \llbracket \nabla_{\Omega} u \rrbracket \, ds \bigg|
 	&\leq \left( \sum_{i=1}^N \frac{C_{inv,0,u}^2}{\delta_0} \|  \Delta_{\Omega} v_h\|^2_{L_2(\Omega_i)} \right)^{\frac{1}{2}} \left( \sum_{F \in \mathcal{F}} \frac{\delta_0}{h_i} \| \llbracket \nabla_{\Omega} u \rrbracket \|^2_{L_2(F)} \right)^{\frac{1}{2}}.
 \end{align*}
The fifth term is also estimated by using the inverse inequalities \eqref{eqn:inverseinequality-1} and \eqref{eqn:inverseinequality-2} for $v_h \in V_h,$ to obtain 
   \begin{align}
 	\bigg| \sum_{F \in \mathcal{F}} \int_{F} \{ \nabla_{\Omega} \Delta_{\Omega} v_h \} \llbracket u \rrbracket  \, ds \bigg|  
 	& \leq \left( \sum_{i=1}^N\frac{C_{inv,0,1}^2}{\delta_1} \| \Delta_{\Omega} v_h \|^2_{L_2(\Omega_i)} \right)^{\frac{1}{2}} \left( \sum_{F \in \mathcal{F}} \int_{F}  \frac{\delta_1}{h_i^3} \| \llbracket u \rrbracket\|^2_{L_2(F)} \right)^{\frac{1}{2}},
 \end{align}
 where $C_{inv,0,1}^2 = C_{inv,0,u}^2C_{inv,1,u}^2.$
By puttting all the terms together and using Cauchy-Schwarz's inequality, we complete the proof 
 with the boundedness constant given as 
 $\mu_b  = 2 \sqrt{\max\{ 1, (1 + C_{inv,0,1}^2|\beta_1|/\delta_1 + C^2_{inv,0,u}|\beta_0|/\delta_0) \} }.$ \qed
\end{proof}
It is also possible to show the existence and uniqueness results for the norm $\|\cdot \|_{h,*}$ due to the uniform equivalence of norms on $V_h.$
\begin{lemma}
 \label{lem:normequivalence}
 The norms $\|\cdot\|_h$ and $\|\cdot\|_{h,*}$ are uniformly equivalent on the discrete space $V_h$ such that 
 \begin{equation}
  \label{eqn:normequivalence}
   C^* \|v_h \|_{h,*} \leq \|v_h \|_h \leq \|v_h \|_{h,*}, \quad \forall v_h \in V_h,
 \end{equation}
 where $C^*$ is a mesh independent positive constant.
\end{lemma}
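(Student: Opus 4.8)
The plan is to prove the two inequalities in \eqref{eqn:normequivalence} separately. The right-hand inequality $\|v_h\|_h \leq \|v_h\|_{h,*}$ is immediate from the definition \eqref{eqn:dgnorm*}, since $\|v\|_{h,*}^2 = \|v\|_h^2 + (\text{nonnegative terms})$; I would dispose of it in one line. The substance of the lemma is the left-hand inequality, equivalently the bound
\begin{align*}
 \sum_{F \in \mathcal{F}} \frac{h_i^3}{\delta_1} \|\{ \nabla_{\Omega} \Delta_{\Omega} v_h \}\|^2_{L_2(F)}
 + \sum_{F \in \mathcal{F}} \frac{h_i}{\delta_0} \|\{ \Delta_{\Omega} v_h \}\|^2_{L_2(F)}
 \;\leq\; C\, \|v_h\|_h^2, \qquad \forall v_h \in V_h,
\end{align*}
which then gives $\|v_h\|_{h,*}^2 \leq (1+C)\|v_h\|_h^2$, i.e. $C^* = (1+C)^{-1/2}$. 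The point is that the two extra averaged terms in $\|\cdot\|_{h,*}$ involve only interior/boundary traces of $\Delta_{\Omega} v_h$ and $\nabla_{\Omega}\Delta_{\Omega} v_h$, and for a discrete function $v_h$ these traces are controlled by interior (volume) norms via the inverse/trace inequalities of Lemma~\ref{lem:inverseinequalities}.

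Concretely, I would proceed facet by facet. Fix $F \subset \partial\Omega_i$ (for an interior facet $F = F_{ij}$, bound the average by the two one-sided contributions and treat each patch separately). For the second term, apply the inverse trace inequality \eqref{eqn:inverseinequality-2} to $w := \Delta_{\Omega} v_h \in V_h$ (note $\Delta_\Omega$ maps $V_h$ into a piecewise-smooth space to which the inverse estimates apply patchwise), giving
$\|\{\Delta_\Omega v_h\}\|_{L_2(F)}^2 \leq C_{inv,0,u}^2\, h_i^{-1}\,\|\Delta_\Omega v_h\|_{L_2(\Omega_i)}^2$,
so the factor $h_i/\delta_0$ cancels the $h_i^{-1}$ and leaves $\tfrac{C_{inv,0,u}^2}{\delta_0}\|\Delta_\Omega v_h\|_{L_2(\Omega_i)}^2$, which sums to $\tfrac{C_{inv,0,u}^2}{\delta_0}\sum_i\|\Delta_\Omega v_h\|_{L_2(\Omega_i)}^2 \leq \tfrac{C_{inv,0,u}^2}{\delta_0}\|v_h\|_h^2$. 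For the first term, apply \eqref{eqn:inverseinequality-2} and then \eqref{eqn:inverseinequality-1} to pass from $\nabla_\Omega\Delta_\Omega v_h$ back to $\Delta_\Omega v_h$:
$\|\{\nabla_\Omega\Delta_\Omega v_h\}\|_{L_2(F)}^2 \leq C_{inv,0,u}^2 h_i^{-1}\|\nabla_\Omega\Delta_\Omega v_h\|_{L_2(\Omega_i)}^2 \leq C_{inv,0,u}^2 C_{inv,1,u}^2 h_i^{-3}\|\Delta_\Omega v_h\|_{L_2(\Omega_i)}^2$,
so the factor $h_i^3/\delta_1$ cancels the $h_i^{-3}$, leaving $\tfrac{C_{inv,0,1}^2}{\delta_1}\|\Delta_\Omega v_h\|_{L_2(\Omega_i)}^2$ with $C_{inv,0,1}^2 = C_{inv,0,u}^2 C_{inv,1,u}^2$ as in Lemma~\ref{lem:dgboundedness}. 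Summing over all facets (using that each patch has a bounded number of facets, absorbed into the constant) yields
$\|v_h\|_{h,*}^2 \leq \big(1 + \tfrac{C_{inv,0,1}^2}{\delta_1} + \tfrac{C_{inv,0,u}^2}{\delta_0}\big)\|v_h\|_h^2$,
and hence $C^* = \big(1 + C_{inv,0,1}^2/\delta_1 + C_{inv,0,u}^2/\delta_0\big)^{-1/2}$, which is mesh independent since the inverse constants and $\delta_0,\delta_1$ are.

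The main obstacle — really the only delicate point — is justifying that the inverse inequalities of Lemma~\ref{lem:inverseinequalities}, stated there for $v \in V_h$, may legitimately be applied to $\Delta_\Omega v_h$ and $\nabla_\Omega\Delta_\Omega v_h$: strictly speaking these are not elements of the NURBS space $V_h$ but of its image under the (geometry-dependent) surface differential operators. One resolves this by pulling back to the parameter domain, where $\Delta_\Omega v_h$ becomes, by \eqref{eqn:laplacebeltrami}, a fixed smooth-coefficient second-order differential operator applied to the piecewise-polynomial $\widehat v_h$, so that the standard polynomial inverse estimates on $\widehat K$ apply, and the quasi-uniformity assumption \eqref{eqn:quasiuniformityassumption} together with the smoothness of $\Phi_i$ transfers the scaling back to the physical patch with $h_i$-powers as claimed; the $\Phi_i$-dependent constants are absorbed into $C_{inv,*,u}$ exactly as in the proof of Lemma~\ref{lem:inverseinequalities}. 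The remaining computations are the routine Cauchy--Schwarz and summation steps indicated above.
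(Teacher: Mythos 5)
Your proof is correct and follows essentially the same route as the paper: the upper bound is immediate from the definition of $\|\cdot\|_{h,*}$, and the lower bound is obtained by applying the inverse inequalities of Lemma~\ref{lem:inverseinequalities} to the two averaged trace terms so that the factors $h_i^3/\delta_1$ and $h_i/\delta_0$ cancel the $h_i$-powers, leaving multiples of $\sum_i\|\Delta_\Omega v_h\|_{L_2(\Omega_i)}^2\leq\|v_h\|_h^2$ (your pull-back remark justifying the inverse estimates for $\Delta_\Omega v_h$ is a detail the paper leaves implicit). The only discrepancy is in the bookkeeping of the constant: your $C^*=\bigl(1+C_{inv,0,1}^2/\delta_1+C_{inv,0,u}^2/\delta_0\bigr)^{-1/2}$ is the consistent outcome of the computation, whereas the paper prints $C^*=\bigl(1+C_{inv,0,1}^2/\delta_0+C_{inv,0,u}^2/\delta_1\bigr)^{-1}$ with the roles of $\delta_0,\delta_1$ interchanged and without the square root; since both are mesh-independent positive constants, the statement of the lemma is unaffected.
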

\begin{proof}
 The upper bound follows immediately. The lower bound is derived by applying the inverse inequalities of Lemma~\ref{lem:inverseinequalities} where we coplete the proof with $C^* = \left(1 + C_{inv,0,1}^2/\delta_0 +C_{inv,0,u}^2/\delta_1 \right)^{-1}$ and $C_{inv,0,1}^2 = C_{inv,0,u}^2C_{inv,1,u}^2$. \qed
\end{proof}
Due to Lemma~\ref{eqn:normequivalence}, we can derive coercivity and boundedness results for the bilinear form $a_h(\cdot,\cdot)$ in the norm $\|\cdot\|_{h,*}.$ By using the results from Theorem~\ref{thm:coercivity}, the coercivity on $V_h$ yields
 \begin{equation}
 \label{eqn:coercivity-2}
  a_h(v_h,v_h) \geq \widetilde{\mu}_c \|v_h\|_{h,*}^2, \quad \forall v_h \in V_h,
 \end{equation}
where $\widetilde{\mu}_c = C^* \mu_c$ is a nonnegative constant independent of $h.$ 
Also, the boundedness of the bilinear form following from Lemma~\ref{lem:dgboundedness} is given by
\begin{align}
  \label{eqn:dgboundedness-2}
  | a_h(u_h,v_h)| \leq \tilde{\mu}_b \|u_h\|_{h,*} \|v_h\|_{h,*}, \quad \forall (u, v_h) \in V_{h,*} \times V_h,
 \end{align}
 with $\tilde{\mu}_b$ independent of $h.$ 
}
\section{Error Estimates for dGIGA discretization scheme}
\label{sec:erroranalysisOfdgigadiscretization}
To obtain \textit{a priori} error estimates in both $\|\cdot\|_h$ and $\|\cdot\|_{h,*}$ norms, we require approximation estimates by means of a quasi-intepolant. We denote by $\Pi_h : L_2(\Omega_i) \rightarrow V_{h,i}$ such a quasi-interpolant that yields optimal approximation results for each patch or subdomain $\Omega_i, i=1,\ldots,N,$ see \cite{BazilevsBeiraoCottrellHughesSangalli:2006a,DaVeigaBuffaSangalliVazquez:2014a} for the proof. 
\begin{lemma}
\label{lem:localerrorestimate}
Let $l_i$ and $s_i$ be integers with $0 \leq l_i \leq s_i \leq p_i+1$ 
 and $K \in \mathcal{K}_{h,i}$. Then there exist an interpolant $\Pi_h v \in V_{h,i}$ 
 for all $ v \in H^{s_i}(\Omega_i)$ and a constant $C_s > 0$ such that 
 the following inequality holds 
 \begin{align}
 \label{eqn:localerrorestimate}
    \sum_{K \in \mathcal{K}_{h,i}}|v - \Pi_{h}v|^{2}_{H^{l}(K)} 
      & \leq C_s  h_i^{2(s_i-l_i)} \|v \|^2_{H^{s_i}(\Omega_i)}, 
 \end{align} 
where $h_i$ is the mesh size in the physical domain, and $p$ denotes the underlying polynomial degree of the B-spline or NURBS.
\end{lemma}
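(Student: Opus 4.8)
The plan is to reduce the claimed estimate on the surface patch $\Omega_i$ to the standard B-spline/NURBS approximation estimate on the parameter domain $\widehat{\Omega}$, where such a quasi-interpolant $\widehat{\Pi}_h$ is already available in the literature (e.g.\ \cite{BazilevsBeiraoCottrellHughesSangalli:2006a,DaVeigaBuffaSangalliVazquez:2014a}). Concretely, I would define the patchwise interpolant by pullback, $\Pi_h v := (\widehat{\Pi}_h \widehat v)\circ \Phi_i^{-1}$ where $\widehat v = v \circ \Phi_i$, so that $\Pi_h v \in V_{h,i}$ by construction. The work then consists of transferring Sobolev seminorms of $v - \Pi_h v$ on $K \subset \Omega_i$ to seminorms of $\widehat v - \widehat{\Pi}_h \widehat v$ on $\widehat K = \Phi_i^{-1}(K)$ and back.

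First I would record the chain-rule relation between the surface differential operators $D^\alpha_\Omega$ (built from the tangential gradient \eqref{eqn:tangentialgradient}) and the parametric derivatives, using the Jacobian $\widehat J$, the first fundamental form $\widehat F$ and the determinant $\widehat g$. Because $\Omega_i$ is a compact smooth patch and $\Phi_i$ is a fixed smooth, nondegenerate mapping, all entries of $\widehat J$, $\widehat F^{-1}$, $\widehat g$ and their derivatives up to the order $s_i$ needed are uniformly bounded above and below on $\widehat\Omega$; this gives a two-sided equivalence
\begin{align*}
 c\,\|\widehat w\|_{H^l(\widehat K)} \leq \|w\|_{H^l(K)} \leq C\,\|\widehat w\|_{H^l(\widehat K)}
\end{align*}
for $w = v - \Pi_h v$, $\widehat w = \widehat v - \widehat{\Pi}_h\widehat v$, with constants depending only on $\Phi_i$ and $l$ (this is where the quasi-uniformity/shape-regularity hypothesis on $\Phi_i$, already invoked in Lemma~\ref{lem:scaledtraceinequality}, enters). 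Next I would apply the known parametric estimate $\sum_{\widehat K}|\widehat v - \widehat{\Pi}_h\widehat v|^2_{H^{l_i}(\widehat K)} \leq \widehat C\, \widehat h^{\,2(s_i - l_i)}\|\widehat v\|^2_{H^{s_i}(\widehat\Omega)}$, then push the right-hand side back to $\Omega_i$ using the same norm equivalence, and finally convert the parametric mesh size $\widehat h$ to the physical mesh size $h_i$ via the quasi-uniformity assumption \eqref{eqn:quasiuniformityassumption}, absorbing the conversion constant into $C_s$. Summing the local contributions over $K \in \mathcal{K}_{h,i}$ (or equivalently noting the estimate is already global on $\Omega_i$) yields \eqref{eqn:localerrorestimate}.

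The main obstacle is bookkeeping the chain rule for higher-order tangential derivatives: unlike the Euclidean case, $D^\alpha_\Omega$ applied to a composed function produces, via Fa\`a di Bruno, a sum of terms involving products of parametric derivatives of $\widehat v$ up to order $|\alpha|$ weighted by derivatives of the geometry factors $\widehat J\widehat F^{-1}$ and $1/\widehat g$ up to order $|\alpha|$. Controlling these requires that $\Phi_i \in C^{s_i}$ (or smoother) with nonvanishing $\widehat g$, so that all geometry-dependent coefficients and their derivatives are uniformly bounded on the compact parameter domain; once that is granted the equivalence of patchwise Sobolev norms under $\Phi_i$ follows by a routine but somewhat tedious induction on the derivative order. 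I would state this norm-equivalence as the key lemma (or cite it from \cite{Moore:2017a}) and keep the computation itself brief, since it is standard for smooth fixed mappings.
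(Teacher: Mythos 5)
Your pullback argument is essentially the same route the paper relies on: the paper does not prove this lemma itself but simply cites \cite{BazilevsBeiraoCottrellHughesSangalli:2006a,DaVeigaBuffaSangalliVazquez:2014a} (and, for the global version, \cite{TagliabueDedeQuarteroni:2014a}), and those references establish the estimate exactly by composing the parametric spline quasi-interpolant with the geometry map and transferring Sobolev (semi)norms through the fixed smooth, nondegenerate $\Phi_i$. Your sketch, including the chain-rule/norm-equivalence bookkeeping and the conversion of parametric to physical mesh size via quasi-uniformity, is a correct reconstruction of that standard argument.
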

If the multiplicity of the inner knots is not larger than $p_i + 1 - l_i$ and $\Pi_h v \in V_h \cap H^{l_i}(\Omega_i)$ for each patch $\Omega_i, i=1,\ldots,N,$ then the local estimate \eqref{eqn:localerrorestimate} yields a global estimate.
\begin{proposition}
\label{prop:globalerrorestimate}
 Let us assume that the multiplicity of the inner knots is not larger than 
 $p_i + 1 - l_i.$  Given the integers $l_i$ and $s_i$ such that 
 $0\leq l_i \leq s_i \leq p_i+1,$ there exist a positive constant $C_s$ such that
  for a function $v \in H^{s_i}(\Omega_i)$ 
 \begin{equation}
 \label{eqn:globalerrorestimate}
   |v - \Pi_{h}v|_{H^{l_i}(\Omega_i)}  
   \leq C_s h_i^{(s_i-l_i)} \|v \|_{H^{s_i}(\Omega_i)}, 
 \end{equation} 
where $h_i$ denotes the maximum mesh-size parameter in the physical domain and the generic constant $C_s$ only depends on $l_i,s_i$ and $p_i$, the shape regularity of the physical domain $\Omega_i$ described by the mapping $\Phi$ and, in particular, $\nabla_{\Omega} \mathrm{\Phi}.$
\end{proposition}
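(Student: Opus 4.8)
The plan is to deduce the global estimate directly from the local one in \lemref{lem:localerrorestimate} by summing the element contributions over $\mathcal{K}_{h,i}$; the only point requiring genuine thought is that this sum of broken element seminorms actually reproduces the true Sobolev seminorm on the patch $\Omega_i$, and this is precisely what the hypothesis on the knot multiplicities buys us.

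First I would record why the multiplicity assumption guarantees the needed global smoothness of the interpolant. A univariate B-spline of degree $p_i$ is $C^{p_i-m}$ across a knot of multiplicity $m$, so the bound $m \leq p_i+1-l_i$ gives $p_i-m \geq l_i-1$; by the tensor-product construction \eqref{eqn:chp2:multivariatebspline} and the (smooth, strictly positive) NURBS weighting, every function in $\mathbb{V}_{h,i}$—in particular $\Pi_h v$—then lies in $H^{l_i}(\widehat{\Omega})$, and composing with the smooth invertible mapping $\Phi_i$ transfers this to $\Pi_h v \in V_{h,i} \cap H^{l_i}(\Omega_i)$ on the physical patch, as anticipated in the remark preceding the proposition. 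Consequently $v - \Pi_h v \in H^{l_i}(\Omega_i)$, so the $l_i$-th order weak derivatives of $v - \Pi_h v$ on $\Omega_i$ agree almost everywhere with their element-wise restrictions and
\begin{equation*}
 |v - \Pi_h v|_{H^{l_i}(\Omega_i)}^2 \;=\; \sum_{K \in \mathcal{K}_{h,i}} |v - \Pi_h v|_{H^{l_i}(K)}^2 .
\end{equation*}

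Then I would simply apply \lemref{lem:localerrorestimate} with $l = l_i$ and $s = s_i$ (the admissibility $0 \leq l_i \leq s_i \leq p_i+1$ holds by assumption) to bound the right-hand side by $C_s\, h_i^{2(s_i-l_i)} \|v\|_{H^{s_i}(\Omega_i)}^2$, and take square roots to obtain \eqref{eqn:globalerrorestimate}. The generic constant inherits from the local estimate its dependence on $l_i, s_i, p_i$ and on the shape regularity of $\Phi_i$ (in particular on $\nabla_{\Omega}\Phi$), where it enters through the change of variables between the reference element $\widehat{K}$ and $K$; the quasi-uniformity assumption \eqref{eqn:quasiuniformityassumption} is what allows the single patch-wise mesh size $h_i$ to replace the individual diameters $h_K$ uniformly in $K$.

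The step deserving the most care—as a matter of bookkeeping rather than of estimation—is the smoothness argument above: one must verify that the restriction on inner-knot multiplicities, combined with the tensor-product and rational (NURBS) structure and with the regularity of $\Phi_i$, genuinely places $\Pi_h v$ in $H^{l_i}(\Omega_i)$ on the physical patch, since otherwise the left-hand side of \eqref{eqn:globalerrorestimate} is not even well defined. Once this is settled, the remainder is the one-line summation displayed above together with a direct invocation of \lemref{lem:localerrorestimate}.
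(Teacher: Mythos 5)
Your argument is correct, and it is precisely the reasoning the paper only gestures at: the paper's ``proof'' of Proposition~\ref{prop:globalerrorestimate} is a bare citation to Proposition~3.2 of Tagliabue--Ded\`e--Quarteroni, while the sentence preceding the proposition already announces the mechanism you spell out (inner-knot multiplicity at most $p_i+1-l_i$ forces $\Pi_h v \in V_{h,i}\cap H^{l_i}(\Omega_i)$, so the local estimate of Lemma~\ref{lem:localerrorestimate} upgrades to a global one). Your two steps are sound: the $C^{p_i-m}\supseteq C^{l_i-1}$ continuity of the B-splines, preserved under the tensor-product construction, the rational (positive-weight) NURBS quotient and composition with the smooth invertible $\Phi_i$, puts $v-\Pi_h v$ in $H^{l_i}(\Omega_i)$, so its patch seminorm coincides with the element-wise broken sum; Lemma~\ref{lem:localerrorestimate} (whose left-hand side is already that broken sum, bounded by the full patch norm $\|v\|_{H^{s_i}(\Omega_i)}$ with the single mesh size $h_i$ thanks to quasi-uniformity) then gives \eqref{eqn:globalerrorestimate} after taking square roots, with the constant inheriting the stated dependencies. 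The only difference from the paper is that you supply a self-contained deduction from the in-paper local lemma, whereas the paper defers the entire statement to the external reference; your version buys transparency at no extra cost, since no ingredient beyond Lemma~\ref{lem:localerrorestimate} and the multiplicity hypothesis is used.
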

\begin{proof}
See \cite[Proposition~3.2]{TagliabueDedeQuarteroni:2014a}. \qed
\end{proof}
%
We consider that the quasi-interpolant is the same for each patch, 
i.e. $\Pi_h : V_D \cap H^{\mathbf{s}}(\Omega,\mathcal{T}_h) \rightarrow V_h$ with $\mathbf{s} \geq 4.$ 
\begin{lemma}
\label{lem:interfaceapproximationerror}
Let $v \in V_D \cap H^{\mathbf{s}}(\Omega, \mathcal{T}_h)$ with a positive integer
$\mathbf{s} \geq 4$ and let $F \in \mathcal{F}_I \cup \mathcal{F}_D$ be the facets. Also, let $\delta_0$ and $\delta_1$ be chosen as in Theorem~\ref{thm:coercivity}. By assuming quasi-uniform meshes, then there exists a quasi-interpolant $\Pi_h$ such that $\Pi_{h}v \in V_h$ and
the following estimates hold;
  \begin{align}
       \|\nabla_{\Omega} ^q( v - \Pi_{h} v) \|_{L_2(\partial \Omega_i)}^{2} &  \leq c_2 h^{2(r_i-q)-1}_{i} \| v \|^{2}_{H^{r_i}(\Omega_i)},  \label{eqn:Interface0} \\
     \sum_{F \in \mathcal{F}}\frac{\delta_1}{h_i^3} \|\llbracket v - \Pi_{h} v \rrbracket \|_{L_2(F)}^{2} &  \leq c_3 \sum_{i=1}^N h^{2(r_i-2)}_{i}\| v \|^{2}_{H^{r_i}(\Omega_i)}, \label{eqn:InterfaceI} \\
    \sum_{F \in \mathcal{F}} \frac{\delta_0}{h_i}  \| \llbracket \nabla_{\Omega} ( v - \Pi_{h} v) \rrbracket  \|_{L_2(F)}^{2} & \leq c_4 \sum_{i=1}^N h^{2(r_i-2)}_{i}\| v \|^{2}_{H^{r_i}(\Omega_i)}, \label{eqn:InterfaceII}\\
     \sum_{F \in \mathcal{F}} \frac{h_i}{\delta_0} \| \{ \Delta_{\Omega} ( v - \Pi_{h} v) \} \|_{L_2(F)}^{2} &  \leq c_5  \sum_{i=1}^Nh^{2(r_i-2)}_{i}\| v \|^{2}_{H^{r_i}(\Omega_i)}, \label{eqn:InterfaceIII}\\
    \sum_{F \in \mathcal{F}}\frac{h_i^3}{\delta_1} \|\{ \nabla_{\Omega}  \Delta_{\Omega}  ( v - \Pi_{h} v)\}  \|_{L_2(F)}^{2} & \leq c_6 \sum_{i=1}^Nh^{2(r_i-2)}_{i}\| v \|^{2}_{H^{r_i}(\Omega_i)}, \label{eqn:InterfaceIV}
  \end{align}
where $ q$ is a positive integer, $r_i =\min\{s_i,p_i + 1\}$ and the generic constants $c_2, c_3,c_4,c_5$ and $c_6$ are independent of the mesh size. 
\end{lemma}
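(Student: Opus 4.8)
The plan is to establish each of the five estimates \eqref{eqn:Interface0}--\eqref{eqn:InterfaceIV} by reducing the boundary/facet norms to interior volume norms through the scaled trace inequality of \lemref{lem:scaledtraceinequality}, and then invoking the polynomial approximation estimate of \propref{prop:globalerrorestimate} patch by patch. First I would prove the fundamental estimate \eqref{eqn:Interface0}: applying \lemref{lem:scaledtraceinequality} to the function $\nabla_\Omega^q(v - \Pi_h v)$ gives
\begin{align*}
 \|\nabla_\Omega^q(v - \Pi_h v)\|_{L_2(\partial\Omega_i)}^2
 \leq C_{tr,u}^2 h_i^{-1}\Big( \|\nabla_\Omega^q(v-\Pi_h v)\|_{L_2(\Omega_i)}^2 + h_i^2 |\nabla_\Omega^q(v-\Pi_h v)|_{H^1(\Omega_i)}^2\Big),
\end{align*}
and then bounding $\|\nabla_\Omega^q(v-\Pi_h v)\|_{L_2(\Omega_i)} \lesssim |v - \Pi_h v|_{H^q(\Omega_i)}$ and $|\nabla_\Omega^q(v-\Pi_h v)|_{H^1(\Omega_i)} \lesssim |v-\Pi_h v|_{H^{q+1}(\Omega_i)}$ (up to geometry-dependent constants coming from the chain rule applied to the tangential operators of Section~\ref{subsec:surfaces}), followed by \eqref{eqn:globalerrorestimate} with $l_i = q$ and $l_i = q+1$ respectively and $s_i$ replaced by $r_i = \min\{s_i,p_i+1\}$. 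Since $h_i^{2(r_i - q)} \geq h_i^{2(r_i-q-1)+2}$ for $h_i \leq 1$, both contributions are dominated by $h_i^{2(r_i-q)-1}\|v\|_{H^{r_i}(\Omega_i)}^2$, giving \eqref{eqn:Interface0}.

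Next, the four facet-sum estimates \eqref{eqn:InterfaceI}--\eqref{eqn:InterfaceIV} follow from \eqref{eqn:Interface0} by choosing the right value of $q$ and using that each interior facet $F \subset \partial\Omega_i \cap \partial\Omega_j$ is counted against both adjacent patches, so $\sum_{F}\|\cdot\|_{L_2(F)}^2 \leq \sum_{i=1}^N \|\cdot\|_{L_2(\partial\Omega_i)}^2$. Concretely: for \eqref{eqn:InterfaceI} the jump $\llbracket v - \Pi_h v\rrbracket$ is controlled by the traces of $v-\Pi_h v$ themselves, i.e. $q=0$, so the bound is $\frac{\delta_1}{h_i^3}\cdot c_2 h_i^{2r_i - 1}\|v\|_{H^{r_i}(\Omega_i)}^2 = c_3 h_i^{2(r_i-2)}\|v\|_{H^{r_i}(\Omega_i)}^2$ after absorbing $\delta_1$ (bounded in terms of $p_i$ by Remark~\ref{rem:penaltyparameter}) into the constant. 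For \eqref{eqn:InterfaceII} we use $q=1$: $\frac{\delta_0}{h_i}\cdot c_2 h_i^{2(r_i-1)-1}\|v\|^2 = c_4 h_i^{2(r_i-2)}\|v\|^2$. For \eqref{eqn:InterfaceIII}, since $\Delta_\Omega$ is a second-order operator, $\|\Delta_\Omega(v-\Pi_h v)\|_{L_2(\partial\Omega_i)}$ is bounded by the $q=2$ case (up to geometry constants from \eqref{eqn:laplacebeltrami}), giving $\frac{h_i}{\delta_0}\cdot c_2 h_i^{2(r_i-2)-1}\|v\|^2 = c_5 h_i^{2(r_i-2)}\|v\|^2$. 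For \eqref{eqn:InterfaceIV}, $\nabla_\Omega\Delta_\Omega$ is third order, so we use $q=3$: $\frac{h_i^3}{\delta_1}\cdot c_2 h_i^{2(r_i-3)-1}\|v\|^2 = c_6 h_i^{2(r_i-2)}\|v\|^2$.

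The main obstacle I anticipate is controlling the surface differential operators: \lemref{lem:scaledtraceinequality} and \propref{prop:globalerrorestimate} are stated for standard Sobolev (semi-)norms $|\cdot|_{H^l(\Omega_i)}$, whereas the quantities in \eqref{eqn:Interface0}--\eqref{eqn:InterfaceIV} involve the tangential gradient $\nabla_\Omega$, the Laplace--Beltrami operator $\Delta_\Omega$, and $\nabla_\Omega\Delta_\Omega$, which by \eqref{eqn:tangentialgradient}, \eqref{eqn:laplacebeltrami} and \eqref{eqn:gradientlaplacebeltrami} are products of the Euclidean derivatives with factors $\widehat J$, $\widehat F^{-1}$, $\widehat g$ and their derivatives. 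One must argue that on each fixed patch these geometry factors and all the derivatives needed are bounded (smoothness and non-degeneracy of $\Phi_i$, plus the quasi-uniformity assumption \eqref{eqn:quasiuniformityassumption}), so that $\|\nabla_\Omega^q w\|_{L_2(\Omega_i)} \simeq |w|_{H^q(\Omega_i)}$ with constants depending only on $\Phi_i$; this is exactly the kind of equivalence absorbed into the generic constants $c_2,\dots,c_6$. The remaining bookkeeping --- matching powers of $h_i$, using $h_i \leq 1$ to discard higher-order terms, and summing over patches with the facet-counting bound --- is routine. I would present \eqref{eqn:Interface0} in full detail and then indicate that \eqref{eqn:InterfaceI}--\eqref{eqn:InterfaceIV} follow by the substitutions $q = 0,1,2,3$ together with the definitions of the weights, referring to \cite{Moore:2018a} and \cite{SuliMozolevski:2004a} for the analogous planar computations.
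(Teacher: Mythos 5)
Your proposal is correct and follows essentially the same route as the paper, whose proof is a one-line appeal to the scaled trace inequality of Lemma~\ref{lem:scaledtraceinequality} combined with the approximation estimate \eqref{eqn:globalerrorestimate}, with the details deferred to \cite{Moore:2018a}. You simply carry out that computation explicitly (trace inequality on $\nabla_\Omega^q(v-\Pi_h v)$, interpolation with $l_i=q$ and $l_i=q+1$, then $q=0,1,2,3$ with the facet weights and patch-counting), and your power-of-$h_i$ bookkeeping checks out.
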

\begin{proof}
 By using the trace inequality \eqref{eqn:scaledtraceinequality} and the approximation estimate \eqref{eqn:globalerrorestimate}, we obtain the proof, see \cite{Moore:2018a}.
	\qed	    
\end{proof}
In the next lemma, we present estimates in the discrete norms $\|\cdot\|_h$ and $\|\cdot\|_{h,*}$ necessary for deriving the \textit{a priori} error estimates. 
\begin{lemma}
\label{lem:dgInterpolantionbound}
Let $v \in V_D \cap H^{\textbf{s}}(\Omega,\mathcal{T}_h)$ for $\mathbf{s} :=(s_1,s_2,\ldots,s_N) \geq 4$ and $\Pi_{h}v \in V_{h}$  be a projection. 
 Then, for $\mathbf{p}:=(p_1,p_2,\ldots,p_N)  \geq 3,$ we have 
 \begin{align}
  \|v- \Pi_h v \|_h^2 & \leq c_7 \sum_{i=1}^{N} h^{2(r_i-2)}_{i}\| u\|^{2}_{H^{r_i}(\Omega_i)},  \label{eqn:dgnormInterpolantionbound} \\
  \|v- \Pi_h v \|_{h,*}^2 & \leq  c_8 \sum_{i=1}^{N}h^{2(r_i-2)}_{i}\| u\|^{2}_{H^{r_i}(\Omega_i)} \label{eqn:dgnormInterpolantionbound*},
 \end{align}
  where $r_i := \min\{s_i, p_i+1\}$, $p_i$ is the degree of the B-spline 
  and the constants $c_7$ and $c_8$ are independent of mesh size $h_i.$ 
\end{lemma}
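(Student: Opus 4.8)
The plan is to prove both estimates by unpacking the definitions of the two discrete norms and recognizing that every term appearing in them has already been bounded, term by term, in Lemma~\ref{lem:interfaceapproximationerror}. Concretely, starting from the definition \eqref{eqn:dgnorm} of $\|\cdot\|_h$, I would apply $\Pi_h v$ in place of the generic argument and write
\[
  \|v-\Pi_h v\|_h^2 = \sum_{i=1}^N \|\Delta_\Omega(v-\Pi_h v)\|_{L_2(\Omega_i)}^2
  + \sum_{i=1}^N \|v-\Pi_h v\|_{L_2(\Omega_i)}^2
  + \sum_{F\in\mathcal F}\frac{\delta_1}{h_i^3}\|\llbracket v-\Pi_h v\rrbracket\|_{L_2(F)}^2
  + \sum_{F\in\mathcal F}\frac{\delta_0}{h_i}\|\llbracket\nabla_\Omega(v-\Pi_h v)\rrbracket\|_{L_2(F)}^2 .
\]
The third and fourth (interface) terms are exactly \eqref{eqn:InterfaceI} and \eqref{eqn:InterfaceII}, each bounded by $c\sum_i h_i^{2(r_i-2)}\|v\|_{H^{r_i}(\Omega_i)}^2$. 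For the first volumetric term I would use $\|\Delta_\Omega w\|_{L_2(\Omega_i)} \le C\,|w|_{H^2(\Omega_i)}$ (a consequence of the explicit formula \eqref{eqn:laplacebeltrami} for the Laplace–Beltrami operator together with the smoothness of $\Phi_i$, so the geometry factors $\widehat g,\widehat F^{-1},\widehat J$ and their derivatives are bounded), and then invoke the global approximation estimate \eqref{eqn:globalerrorestimate} with $l_i=2$: this gives $C_s^2 h_i^{2(r_i-2)}\|v\|_{H^{r_i}(\Omega_i)}^2$. The second volumetric term is \eqref{eqn:globalerrorestimate} with $l_i=0$, contributing $C_s^2 h_i^{2r_i}\|v\|_{H^{r_i}(\Omega_i)}^2$, which is of higher order in $h_i$ and hence absorbed. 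Summing over $i$ and taking $c_7$ to be the maximum of the constants produced yields \eqref{eqn:dgnormInterpolantionbound}.

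For the second estimate I would start from \eqref{eqn:dgnorm*}, which reads $\|w\|_{h,*}^2 = \|w\|_h^2 + \sum_{F}\frac{h_i^3}{\delta_1}\|\{\nabla_\Omega\Delta_\Omega w\}\|_{L_2(F)}^2 + \sum_{F}\frac{h_i}{\delta_0}\|\{\Delta_\Omega w\}\|_{L_2(F)}^2$. With $w = v-\Pi_h v$, the first summand is already bounded by \eqref{eqn:dgnormInterpolantionbound}, and the two extra averaged-flux terms are precisely \eqref{eqn:InterfaceIII} and \eqref{eqn:InterfaceIV}. Adding these three contributions and taking $c_8$ as the sum (or maximum, up to a factor) of $c_7,c_5,c_6$ gives \eqref{eqn:dgnormInterpolantionbound*}. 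A small point to note is that the right-hand sides in the lemma statement are written with $\|u\|_{H^{r_i}(\Omega_i)}$ whereas the hypothesis is on $v$; I would simply read this as $\|v\|_{H^{r_i}(\Omega_i)}$ (consistent with \eqref{eqn:InterfaceI}--\eqref{eqn:InterfaceIV}), since $u$ and $v$ coincide when the lemma is applied to the exact solution in the subsequent error analysis.

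The only genuine obstacle — and it is mild — is controlling the volumetric $\Delta_\Omega$ term, because Lemma~\ref{lem:interfaceapproximationerror} provides only boundary/interface estimates, not a bound on $\|\Delta_\Omega(v-\Pi_h v)\|_{L_2(\Omega_i)}$. The remedy is the one indicated above: the Laplace–Beltrami operator is a second-order operator with coefficients that are smooth functions of the NURBS geometry map, so on each patch $\|\Delta_\Omega w\|_{L_2(\Omega_i)} \le C(\Phi_i)\,\|w\|_{H^2(\Omega_i)}$ (indeed $\lesssim |w|_{H^2(\Omega_i)} + |w|_{H^1(\Omega_i)}$), and then Proposition~\ref{prop:globalerrorestimate} with $l_i\in\{0,1,2\}$ closes the gap; the $H^1$ and $H^0$ pieces are strictly higher order in $h_i$ and get absorbed into the $h_i^{2(r_i-2)}$ term. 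Everything else is bookkeeping: splitting the squared norm, matching each term to an already-proved estimate, and collecting constants. As in the cited reference \cite{Moore:2018a}, I would keep the write-up short, citing \eqref{eqn:globalerrorestimate} and \eqref{eqn:InterfaceI}--\eqref{eqn:InterfaceIV} explicitly at each step.
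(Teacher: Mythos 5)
Your proof is correct and follows essentially the same route as the paper, whose one-line argument simply invokes Lemma~\ref{lem:interfaceapproximationerror} together with the definitions of the norms \eqref{eqn:dgnorm} and \eqref{eqn:dgnorm*}. You in fact supply the detail the paper leaves implicit --- bounding the volumetric $\Delta_\Omega$ and $L_2$ terms via Proposition~\ref{prop:globalerrorestimate} with $l_i=2$ and $l_i=0$ after controlling $\Delta_\Omega$ by the $H^2$-norm on each patch --- and your reading of $\|u\|_{H^{r_i}(\Omega_i)}$ as $\|v\|_{H^{r_i}(\Omega_i)}$ is the intended one.
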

\begin{proof}
The proof follows by using Lemma~\ref{lem:interfaceapproximationerror} together with the definitions of the norms \eqref{eqn:dgnorm} and \eqref{eqn:dgnorm*}.
 \qed
\end{proof}
Finally, we present the \textit{a priori} error estimate in the norm $\|\cdot \|_h$ and $\|\cdot \|_{h,*}$ as follows
\begin{theorem}
 \label{thm:dgerrorestimate}
  Let $u \in V_D \cap H^{\textbf{s}}(\Omega,\mathcal{T}_h)$ with $\mathbf{s} = \{s_i,i =1,\ldots,N \}, s_i\geq 4$ be the solution of \eqref{eqn:dgvariationalform} for non-negative real numbers $\delta_0$ and $\delta_1$ chosen as in Theorem~\ref{thm:coercivity}. Let $u_h \in V_h$ be the solution of \eqref{eqn:discretedgvariationalform}, then there exists $c > 0$ independent of $h_i$ and $N$ such that the following holds:
  \begin{align}
  \label{eqn:dgerrorestimate}
   \|u-u_h\|_h & \leq c \sum_{i=1}^{N}h^{r_i-2}_{i}\| u \|_{H^{r_i}(\Omega_i)} 
  \end{align}
  where $r_i =\min\{s_i,p_i+1\}.$ 
\end{theorem}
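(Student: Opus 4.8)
The plan is to use a standard Strang/Céa-type argument: split the error through the quasi-interpolant, control the interpolation part directly by Lemma~\ref{lem:dgInterpolantionbound}, and control the discrete remainder through coercivity, Galerkin orthogonality, and boundedness of $a_h(\cdot,\cdot)$.

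First I would introduce the quasi-interpolant $\Pi_h u \in V_h$ of Proposition~\ref{prop:globalerrorestimate}/Lemma~\ref{lem:dgInterpolantionbound} (recall that $\Pi_h$ acts on $V_D\cap H^{\mathbf{s}}(\Omega,\mathcal{T}_h)$ with $\mathbf{s}\geq 4$, so this is admissible) and use the triangle inequality
\[
\|u-u_h\|_h \le \|u-\Pi_h u\|_h + \|\Pi_h u - u_h\|_h .
\]
The first term is immediately bounded by \eqref{eqn:dgnormInterpolantionbound}. It remains to estimate the discrete function $w_h := \Pi_h u - u_h \in V_h$.

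Next I would apply the coercivity of Theorem~\ref{thm:coercivity} to obtain $\mu_c\|w_h\|_h^2 \le a_h(w_h,w_h)$, then write $w_h = (\Pi_h u - u) + (u-u_h)$ and use the Galerkin orthogonality \eqref{eqn:galerkinorthogonality} (legitimate since $w_h\in V_h$) to kill the term $a_h(u-u_h,w_h)$, leaving $a_h(w_h,w_h) = a_h(\Pi_h u - u, w_h)$. Since $u\in V_D\cap H^{\mathbf{s}}(\Omega,\mathcal{T}_h)$ with $\mathbf{s}\ge 4$ and $\Pi_h u\in V_h$, the consistency error $\Pi_h u - u$ belongs to $V_{h,*}$, so the boundedness result Lemma~\ref{lem:dgboundedness} yields
\[
a_h(\Pi_h u - u, w_h) \le \mu_b \|\Pi_h u - u\|_{h,*}\,\|w_h\|_h .
\]
Cancelling one factor of $\|w_h\|_h$ gives $\|w_h\|_h \le (\mu_b/\mu_c)\,\|u-\Pi_h u\|_{h,*}$.

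Finally I would combine the two pieces: using $\|u-\Pi_h u\|_h \le \|u-\Pi_h u\|_{h,*}$ from Lemma~\ref{lem:normequivalence} for the first term as well, the whole error is bounded by $(1+\mu_b/\mu_c)\|u-\Pi_h u\|_{h,*}$, and then \eqref{eqn:dgnormInterpolantionbound*} gives $\|u-\Pi_h u\|_{h,*}^2 \le c_8\sum_{i=1}^N h_i^{2(r_i-2)}\|u\|_{H^{r_i}(\Omega_i)}^2$. Taking square roots and using $\big(\sum_i a_i^2\big)^{1/2}\le \sum_i a_i$ produces the stated bound with $c$ depending only on $\mu_b$, $\mu_c$, $c_7$, $c_8$, hence independent of $h_i$ and of $N$. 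The routine part is the norm bookkeeping; the one genuine subtlety is matching the \emph{two} discrete norms correctly — boundedness is only available on $V_{h,*}\times V_h$, so the consistency error must be measured in $\|\cdot\|_{h,*}$ (which forces the regularity requirement $\mathbf{s}\ge 4$ and the use of \eqref{eqn:dgnormInterpolantionbound*}), while $w_h$ is measured in $\|\cdot\|_h$; verifying $\Pi_h u - u\in V_{h,*}$ and keeping these two scales separate is the main thing to get right, everything else being a direct chain of the preceding lemmas.
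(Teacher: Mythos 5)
Your proposal is correct and follows essentially the same route as the paper: triangle inequality through $\Pi_h u$, then coercivity, Galerkin orthogonality and the $V_{h,*}\times V_h$ boundedness to get $\|\Pi_h u-u_h\|_h \le (\mu_b/\mu_c)\|u-\Pi_h u\|_{h,*}$, concluded with Lemma~\ref{lem:dgInterpolantionbound}. The only cosmetic difference is that you funnel both terms through $\|\cdot\|_{h,*}$ (note $\|v\|_h\le\|v\|_{h,*}$ holds trivially from the definition \eqref{eqn:dgnorm*}, not via Lemma~\ref{lem:normequivalence}, which is stated only on $V_h$), whereas the paper bounds $\|u-\Pi_h u\|_h$ directly by \eqref{eqn:dgnormInterpolantionbound}, giving $c=c_7^{1/2}+(\mu_b/\mu_c)c_8^{1/2}$.
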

\begin{proof}
 Using the coercivity result of Lemma~\ref{lem:dgcoercivity}, the Galerkin orthogonality \eqref{eqn:galerkinorthogonality} and the boundedness of Lemma~\ref{lem:dgboundedness}, 
 we can derive the following estimates
\begin{align*}
  \mu_c\|\Pi_h u - u_h\|_h ^2 & \leq a_h(\Pi_h u - u_h,\Pi_h u - u_h)
				  = a_h(\Pi_h u - u,\Pi_h u - u_h) \\
				& \leq \mu_b \|\Pi_h u - u\|_{h,*}\| \Pi_h u - u_h\|_{h}.
\end{align*}
 Thus, we have
 \begin{align}
 \label{eqn:dgerrorintermediate}
   \mu_c\|\Pi_h u- u_h\|_h & \leq \mu_b \|\Pi_h u- u\|_{h,*}.
 \end{align}
 Using \eqref{eqn:dgerrorintermediate} and the estimates from  
 Lemma~\ref{lem:dgInterpolantionbound}, we obtain by using triangle inequality the 
 following estimate
 \begin{align}
   \|u-u_h\|_h &\leq \|u- \Pi_h u\|_h + \mu_b \|\Pi_h u- u\|_{h,*} \nonumber \\
   &\le \left( c_7^{1/2} + (\mu_b/\mu_c)  c_8^{1/2} \right)\sum_{i=1}^{N}h^{r_i-2}_{i}\| u\|_{H^{r_i}(\Omega_i)},
 \end{align}
 with $ c= (c_7^{1/2}+ (\mu_b/\mu_c) c_8^{1/2}).$
  \qed
\end{proof}
\begin{corollary}
\label{cor:dgerrorestimate}
 Following the hypothesis and assumptions of Theorem~\ref{thm:dgerrorestimate}, then there exists a constant $c_9$ such that the following estimate holds
   \begin{align}
  \label{eqn:dgerrorestimate-2}
   \|u-u_h\|_{h,*} & \leq c_9 \sum_{i=1}^{N}h^{r_i-2}_{i}\| u \|_{H^{r_i}(\Omega_i)},
  \end{align}
  where $r_i =\min\{s_i,p_i+1\}.$ 
\end{corollary}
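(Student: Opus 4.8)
The plan is to derive the $\|\cdot\|_{h,*}$ estimate by combining the already-proved $\|\cdot\|_h$ estimate of Theorem~\ref{thm:dgerrorestimate} with the interpolation bound \eqref{eqn:dgnormInterpolantionbound*} of Lemma~\ref{lem:dgInterpolantionbound} via the norm equivalence of Lemma~\ref{lem:normequivalence}. The key observation is that $\Pi_h u - u_h$ lies in the discrete space $V_h$, so the two norms are uniformly equivalent on it, whereas $u - \Pi_h u$ need not lie in $V_h$ and must be controlled directly in the $\|\cdot\|_{h,*}$ norm by Lemma~\ref{lem:dgInterpolantionbound}.

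First I would split the error by the triangle inequality in the $\|\cdot\|_{h,*}$ norm:
\begin{align*}
 \|u - u_h\|_{h,*} \leq \|u - \Pi_h u\|_{h,*} + \|\Pi_h u - u_h\|_{h,*}.
\end{align*}
The first term is bounded directly by \eqref{eqn:dgnormInterpolantionbound*}, giving $c_8^{1/2}\sum_{i=1}^N h_i^{r_i-2}\|u\|_{H^{r_i}(\Omega_i)}$. For the second term, since $\Pi_h u - u_h \in V_h$, Lemma~\ref{lem:normequivalence} yields $\|\Pi_h u - u_h\|_{h,*} \leq (C^*)^{-1}\|\Pi_h u - u_h\|_h$. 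Then I would bound $\|\Pi_h u - u_h\|_h$ either by the intermediate estimate \eqref{eqn:dgerrorintermediate}, namely $\mu_c\|\Pi_h u - u_h\|_h \leq \mu_b\|\Pi_h u - u\|_{h,*}$, and apply \eqref{eqn:dgnormInterpolantionbound*} again, or equivalently by another triangle inequality $\|\Pi_h u - u_h\|_h \leq \|u - \Pi_h u\|_h + \|u - u_h\|_h$ together with Theorem~\ref{thm:dgerrorestimate} and \eqref{eqn:dgnormInterpolantionbound}.

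Collecting the terms, all contributions are of the form $(\text{const})\sum_{i=1}^N h_i^{r_i-2}\|u\|_{H^{r_i}(\Omega_i)}$, so the final constant is $c_9 = c_8^{1/2} + (C^*)^{-1}(\mu_b/\mu_c)\,c_8^{1/2}$ (or an analogous combination of $c_7^{1/2}$, $c_8^{1/2}$ and $c$ from Theorem~\ref{thm:dgerrorestimate}), which is independent of $h_i$ and $N$. There is no real obstacle here; the only point requiring care is ensuring that the norm equivalence of Lemma~\ref{lem:normequivalence} is applied \emph{only} to the discrete function $\Pi_h u - u_h$ and never to $u - \Pi_h u$, since the latter is not in $V_h$ and the equivalence would fail for it — the $\|\cdot\|_{h,*}$ norm genuinely carries the extra flux terms $\sum_{F}\frac{h_i^3}{\delta_1}\|\{\nabla_\Omega\Delta_\Omega(\cdot)\}\|_{L_2(F)}^2$ and $\sum_{F}\frac{h_i}{\delta_0}\|\{\Delta_\Omega(\cdot)\}\|_{L_2(F)}^2$ that must be absorbed through Lemma~\ref{lem:interfaceapproximationerror} estimates \eqref{eqn:InterfaceIII}–\eqref{eqn:InterfaceIV}.
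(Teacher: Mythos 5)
Your proposal is correct and is in substance the paper's own argument: the paper simply packages your explicit use of Lemma~\ref{lem:normequivalence} on the discrete function $\Pi_h u - u_h$ into the starred-norm coercivity and boundedness statements \eqref{eqn:coercivity-2} and \eqref{eqn:dgboundedness-2}, then repeats the proof of Theorem~\ref{thm:dgerrorestimate} in the $\|\cdot\|_{h,*}$ norm, yielding the same structure of constant (your caution to apply the equivalence only to discrete functions, and to bound $\|u-\Pi_h u\|_{h,*}$ via \eqref{eqn:dgnormInterpolantionbound*}, is exactly right).
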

\begin{proof}
 The proof follows similar argument as in Theorem~\ref{thm:dgerrorestimate} using 
 \eqref{eqn:coercivity-2} and \eqref{eqn:dgboundedness-2} with 
 $c_9=(c_7^{1/2}+ (\widetilde{\mu_b}/\widetilde{\mu_c}) c_8^{1/2}).$
 \qed
\end{proof}
The estimates from Theorem~\ref{thm:dgerrorestimate} yields a sharper bound since the estimate from corollary~\ref{cor:dgerrorestimate} relies on the norm equivalence i.e. Lemma~\ref{lem:normequivalence}.  
\section{Numerical Results}
\label{sec:NumericalResults}
In this section, we present numerical results for the model problem \eqref{eqn:modelproblem}. 
The numerical experiments are carried out in \gismo; an open source object-oriented simulation tool developed solely for IGA, see,  \cite{JuettlerLangerMantzaflarisMooreZulehner:2014a}.
The penalty parameters $\delta_0=\delta_1 = (p+1)(p+3)/3$ where $p$ is the NURBS degree, see Remark~\ref{rem:penaltyparameter}. We consider for the open surface, a quarter cylinder and for the closed surface, a torus as computational domains. Each of the surfaces considered consists of four $(4)$ patches with matching underlying meshes. The resulting linear system from the discrete dGIGA scheme \eqref{eqn:discretedgvariationalform} has been solved using the SuperLU on an Intel Core (TM) i5-4300 CPU $@1.90GHz.$ The convergence rate is computed using the formula $rate = \log_2 \left( e_{i}/e_{i+1} \right),$ where $e_{i+1} = \|u -u_{h,i+1}\|_h$ and $e_{i} = \|u -u_{h,i}\|_h$ to study the discrete solution of the model problem. 
\subsection{Open Surface}
\label{subsec:opensurface}
 We consider a Dirichlet biharmonic problem on an open surface $\Omega$ that is given by a quarter cylinder in the first quadrant i.e. $x \geq 0$ and $y \geq 0$ with unitary radius and height $L = 4.$ The computational domain $\Omega$ is decomposed into 4 patches, with each of 
 the patches having a unit height of one $(1)$ and depicted by a different color as seen on the 
left-hand side of Figure~\ref{fig:opensurface}. The knot vectors representing the geometry of each patch are given by $\Xi_{1}= \{0, 0, 0, 1, 1, 1\}$ and $\Xi_{2}= \{0, 0, 1, 1\}$ in the $\xi_1-$direction and $\xi_2-$direction respectively. The exact solution is chosen as $u(\phi, z) = \varrho g_{\phi,1}(\phi)g_{z}(z),$ where $\phi:= \arctan \left(\frac{x}{y}\right), g_{\phi,1}(\phi)=(1-\cos(\phi))(1-\sin(\phi))$ and $g_{z}(z)=\sin(\sigma \pi z/L)$ are in cylindrical coordinates $(r,\phi,z)$ which yields the source function
\begin{equation*}
 f(\phi,z) = \frac{\varrho g_z(z)\bigg(2\pi^4\sigma^4 + (\pi^2\sigma^2+4L^2)^2\sin(2\phi)-2(\pi^2\sigma^2+L^2)^2(\sin(\phi)+\cos(\phi))\bigg)}{2L^4}.
\end{equation*}
In the numerical experiments, we set $\sigma = 3, \varrho = 1/\left(3/2 -\sqrt{2} \right)$ and present the contours of the solution, see, Figure~\ref{fig:opensurface} (right).
The rate of convergence for all four $(4)$ dGIGA schemes with respect to the discrete norm $\|\cdot\|_h$ is presented in Table~\ref{tab:opensurface} by successive mesh refinement for NURBS degree $2 \leq p \leq 6.$ We observe the optimal convergence rate $\mathcal{O}(h^{(p-1)})$ as theoretically predicted in Theorem~\ref{thm:dgerrorestimate}. Since the discrete norm $\|\cdot \|_h$ is indeed a norm on $H^2(\Omega,\mathcal{T}_h),$ we require at least a continuously differentiable basis function i.e. $C^{p-1}, p \geq 2$ to obtain optimal convergence rates. 
    \begin{figure}[thb!]
     \centering
      \includegraphics[width=0.48\textwidth]{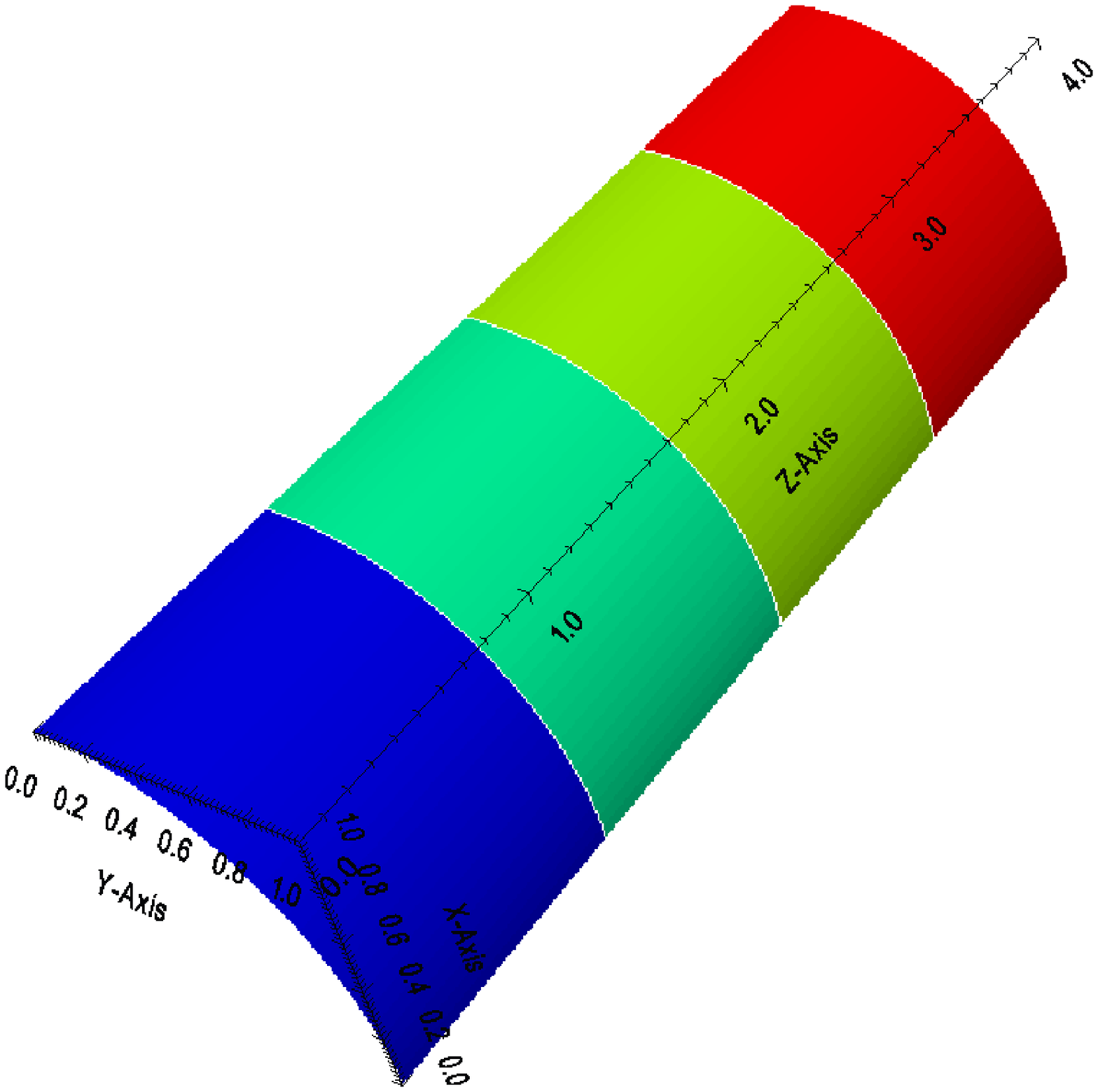}
      \includegraphics[width=0.48\textwidth]{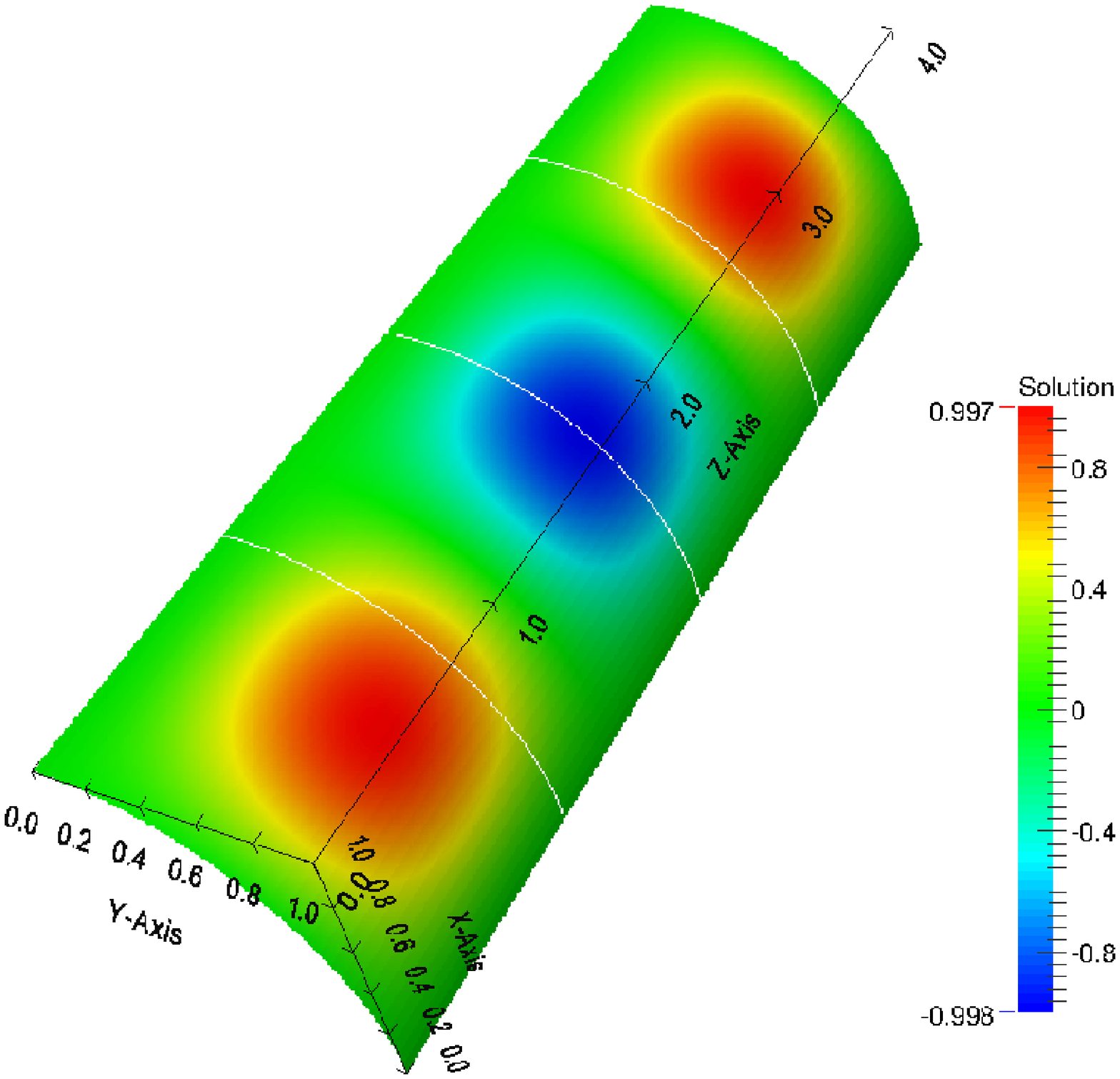}
      \caption{$2-$dimensional computational domain of a quarter cyliinder (left) consisting of four patches illustrated with different colors and the solution contours (right).}
      \label{fig:opensurface}
    \end{figure}
\begin{table}[bth!]
	\label{tab:opensurface}
	\centering 
	\begin{tabular}{|c|c|c|c|c|}	\hline
		\textbf{Method} &\textbf{SIPG} & \textbf{SSIPG1} &\textbf{SSIPG2} &\textbf{NIPG} \\ \hline
                & $0.95$& $0.99$& $0.94$ & $1.14$\\ %
		$p=2$   & $1.00$& $1.00$& $1.00$ & $1.00$ \\ %
                & $1.00$& $1.00$& $1.00$ & $1.00$\\  \hline%
		        & $2.01$& $2.05$& $2.02$ & $2.14$ \\ %
        $p=3$   & $2.00$& $2.00$& $2.00$ & $2.04$\\ %
                & $2.00$& $2.00$& $2.00$ & $2.00$\\ \hline%
                & $3.03$& $3.05$& $3.15$ & $3.43$ \\ %
		$p=4$   & $3.00$& $3.00$& $3.09$ & $3.11$\\ %
                & $3.00$& $3.00$& $3.00$ & $3.00$\\ \hline%
                & $4.07$& $4.26$& $4.04$ & $4.13$\\ %
		$p=5$   & $4.02$& $4.02$& $4.00$ & $4.01$\\ %
                & $4.00$& $4.00$& $4.00$ & $4.00$\\ \hline%
                & $5.06$& $5.13$& $5.01$ & $5.01$\\ %
		$p=6$   & $5.01$& $5.03$& $5.00$ & $5.00$\\
                & $5.00$& $5.00$& $5.00$ & $5.00$\\
		\hline
	\end{tabular}	
	\caption{Convergence rates for the open surfaace problem $\|\cdot \|_h.$} 
\end{table}
\subsection{Closed Surface}
\label{subsec:closedsurface}
We consider for the closed surface, a torus, 
$$\Omega = \{(x,y) \in (-3,3)^2, z \in (-1,1): \; r^2 = z^2 + (\sqrt{x^2+y^2}-R^2) \},$$ 
that is decomposed into 4 patches as depicted on the left-hand side of Figure~\ref{fig:closedsurface}. Each of the NURBS patches is described by the knot vectors  
$\Xi_{2}= \{0, 0, 0, 1, 1, 1\}$ and  $\Xi_{1}  = \{0, 0, 0, 0.25, 0.25, 0.50, 0.50, 0.75, 0.75, 1, 1, 1\}.$
For the surface biharmonic problem, we consider an exact solution given by $u(\phi,\theta)= \sin(3 \phi)\cos(3\theta + \phi),$ see also \cite{LarssonLarson:2013a}. We chose the exact solution $u$ and the force term $f$ such that the zero mean compatibility condition holds. 
The rate of convergence for all four $(4)$ dGIGA schemes with respect to the discrete norm $\|\cdot\|_h$ is presented in \tabref{tab:closedsurface} by successive mesh refinement for NURBS degree $2 \leq p \leq 6.$ We observe the optimal convergence rate $\mathcal{O}(h^{(p-1)})$ as theoretically predicted in Theorem~\ref{thm:dgerrorestimate}. Since the discrete norm $\|\cdot \|_h$ is indeed a norm on $H^2(\Omega,\mathcal{T}_h),$ we require at least a continuously differentiable basis function i.e. $C^{p-1}, p \geq 2$ to obtain optimal convergence rates.
    \begin{figure}[thb!]
     \centering
      \includegraphics[width=0.48\textwidth]{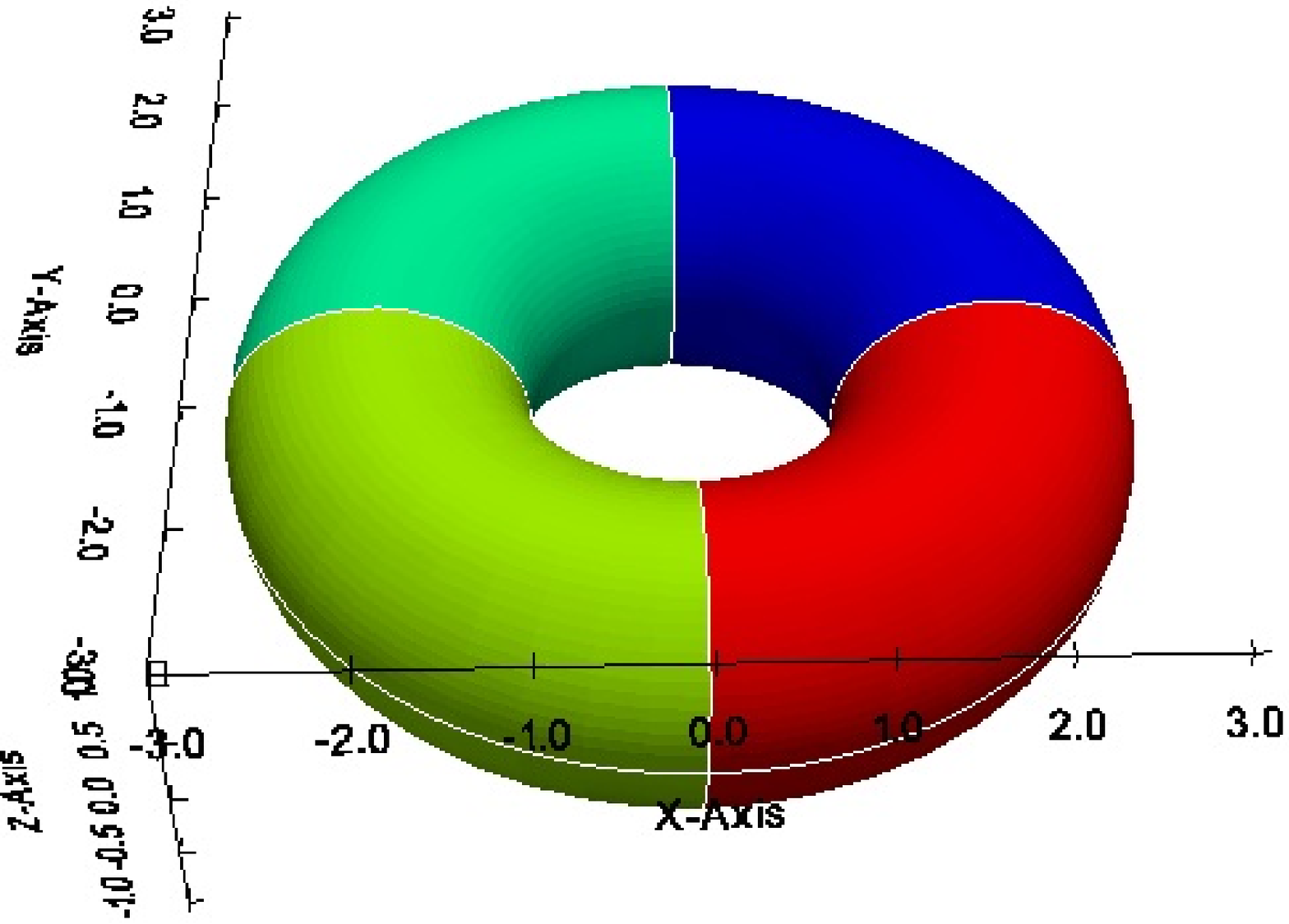}
      \includegraphics[width=0.48\textwidth]{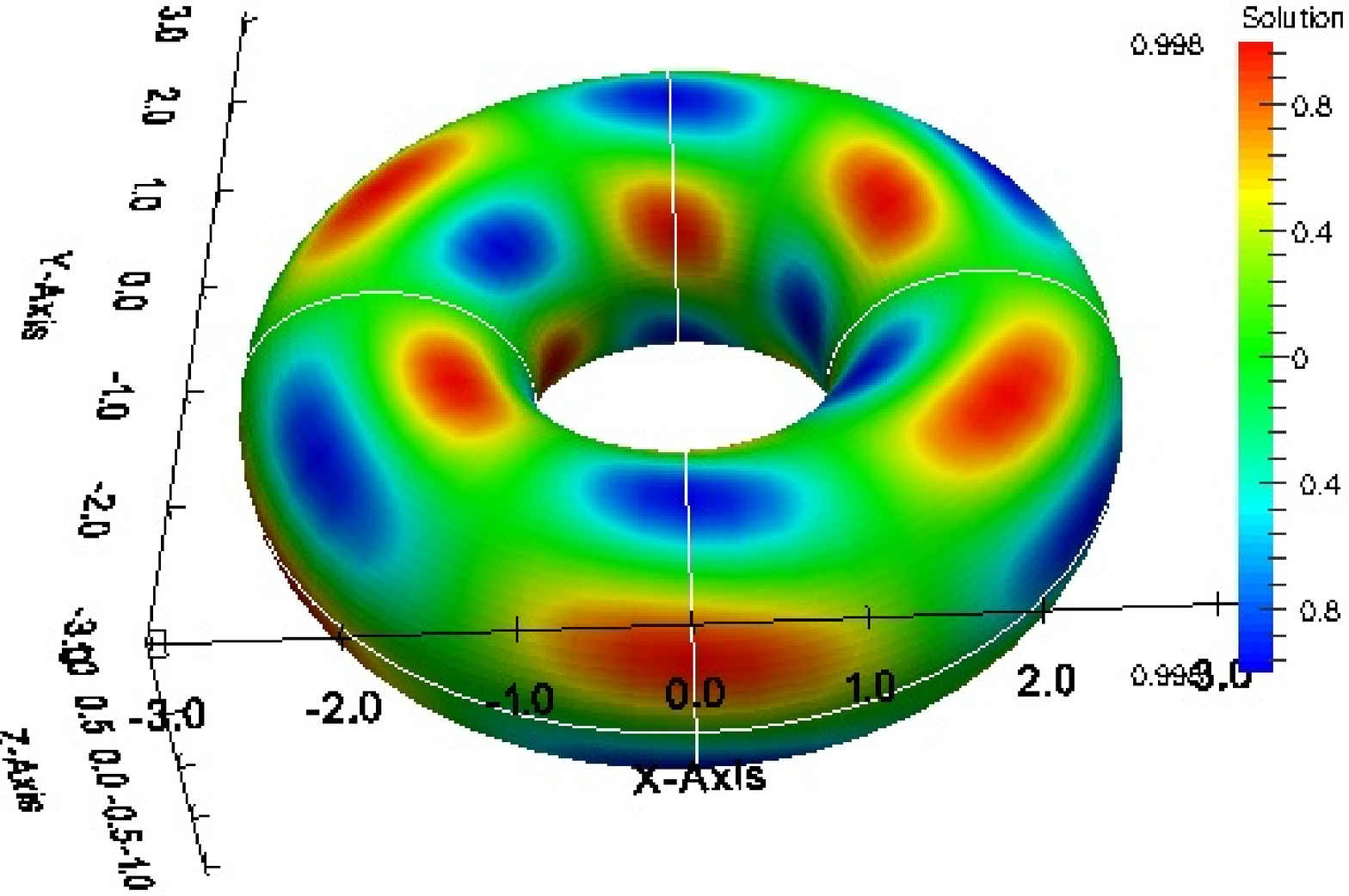}
      \caption{$2-$dimensional computational domain of a torus (left) consisting of four patches illustrated with different colors and the solution contours (right).}
      \label{fig:closedsurface}
    \end{figure}
\begin{table}[!bht]
	\label{tab:closedsurface}
	\centering 
	\begin{tabular}{|c|c|c|c|c|}	\hline
		\textbf{Method} &\textbf{SIPG} & \textbf{SSIPG1} &\textbf{SSIPG2} &\textbf{NIPG} \\ \hline
                & $1.09$& $1.12$& $1.07$ & $1.11$ \\ %
		$p=2$   & $1.00$& $1.01$& $1.00$ & $1.00$ \\ %
                & $1.00$& $1.00$& $1.00$ & $1.00$ \\  \hline%
		        & $2.02$& $2.09$& $2.05$ & $2.11$ \\ %
        $p=3$   & $2.00$& $2.00$& $2.00$ & $2.00$ \\ %
                & $2.00$& $2.00$& $2.00$ & $2.00$ \\ \hline%
                & $3.01$& $3.07$& $3.03$ & $3.10$ \\ %
		$p=4$   & $3.00$& $3.00$& $3.00$ & $3.00$ \\ %
                & $3.00$& $3.00$& $3.00$ & $3.00$ \\ \hline%
                & $4.02$& $4.07$& $4.10$ & $4.09$ \\ %
		$p=5$   & $4.00$& $4.00$& $4.00$ & $4.00$ \\ %
                & $4.00$& $4.00$& $4.00$ & $4.00$ \\ \hline%
                & $5.02$& $5.03$& $5.10$ & $5.07$ \\ %
		$p=6$   & $5.00$& $5.00$& $5.00$ & $5.00$ \\ %
                & $5.00$& $5.00$& $5.00$ & $5.00$ \\ \hline
	\end{tabular}
	\caption{Convergence rates for the closed surfaace problem $\|\cdot \|_h.$} 
\end{table}    
\section*{Conclusion}
\label{sec:Conclusion}
We have presented \textit{a priori} error estimates for the multipatch discontinuous Galerkin isogeometric  analysis (dGIGA) for the surface biharmonic problem on orientable computational domains. We assumed non-overlapping subdomains usually referred to as patches such that the solution could be discontinuous on the internal facets and applied interior penalty Galerkin techniques. We arrived at four bilinear forms namely; symmetric (SIPG), non-symmetric (NIPG), semi-symmetric 1 (SSIPG1) and semi-symmetric 2 (SSIPG2) interior penalty Galerkin methods. We showed optimal \textit{a priori} error estimates with respect to two discrete norms $\|\cdot\|_h$ and $\|\cdot\|_{h,*}$ and presented numerical results for closed and open surfaces that confirmed the analysis presented. We will extend the current results to biharmonic problems with singularities as treated for the second order elliptic PDE in \cite[Chapter 4]{Moore:2017a}.

\bibliographystyle{plain} 
\bibliography{Surface_dGBiharmonic}

\end{document}